\newtheorem{theorem}{Theorem}[section]
\newtheorem{corollary}[theorem]{Corollary}
\newtheorem{lemma}[theorem]{Lemma}
\newtheorem{proposition}[theorem]{Proposition}
\theoremstyle{definition}
\newtheorem{example}[theorem]{Example}
\newtheorem{examples}[theorem]{Examples}
\newtheorem{definition}[theorem]{Definition}
\newtheorem{remark}[theorem]{Remark}
\newtheorem{remarks}[theorem]{Remarks}
\def\Cc{\hbox{\sf C\kern -.47em {\raise .48ex \hbox{$\scriptscriptstyle |$}}
		\kern-.5em {\raise .48ex \hbox{$\scriptscriptstyle |$}} }}
\newcommand{\mc}[1]{\mathcal {#1}}
\newcommand{\mcb}[1]{\widehat{\mathcal {#1}}}
\newcommand{\Sn}[1]{\mc{#1}_n}
\newcommand{\Snb}[1]{\mcb{#1}_n}
\newcommand{\PB}{\Snb{M}} 
\renewcommand{\PB}{\Sn{B}}
\newcommand{\DD}{\Sn{D}} 
\newcommand{\PBDD}{\Sn{S}} 
\newcommand{\UT}{\Sn{R}} 
\newcommand{\LT}{\Sn{L}} 
\newcommand{\PBUT}{\Snb{R}} 
\newcommand{\PBLT}{\Snb{L}} 
\newcommand{\CC}{\mathbb{C}} 
\newcommand{\EE}{\mathbb{E}}
\newcommand{\LL}{\mathbb{L}}
\newcommand{\MM}{\mathbb{M}}
\DeclareMathOperator{\tr}{\mathrm{tr}}
\DeclareMathOperator{\trace}{\mathrm{Tr}}
\newcommand{\Cnn}{\CC^{n \times n}}
\renewcommand{\Cnn}{\Sn{M}}
\numberwithin{equation}{section}
\newcommand{\addconv}{\boxplus}
\newcommand{\multconv}{\boxtimes}
\newcommand{\act}[1]{{#1}^{\,\addconv}}
\newcommand{\mct}[1]{{#1}^{\,\multconv}}
\newcommand{\biact}[1]{{#1}^{\,\addconv \, \addconv}}
\newcommand{\ltfrac}[2]{\mbox{\large$\frac{#1}{#2}$}}
\mathchardef\mhyphen="2D
\begin{document}

	\title[]{On matrices in finite free position}
	
	\author{Octavio Arizmendi}
	\email[O.~Arizmendi]{octavius@cimat.mx}
	\author{Franz Lehner}
	\email[F.~Lehner]{lehner@math.tugraz.at}
	\thanks{\mbox{\includegraphics[height=2em]{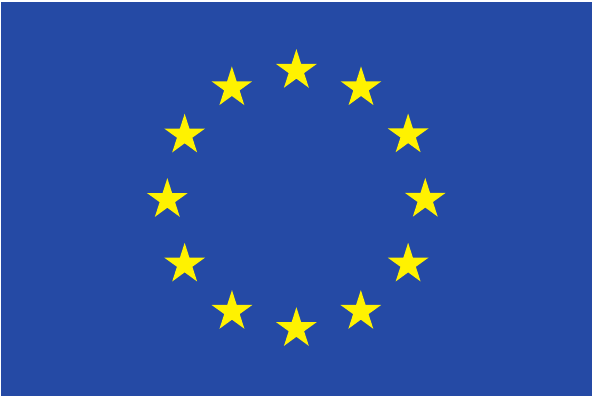}} \parbox[b][1.5em]{0.25\textwidth}{Co-funded
			by\\ the European Union}
		\parbox[b][1.5em]{0.65\textwidth}{F.L.~was partly supported by the H2020-MSCA-RISE project 734922 - CONNECT}
	}
	
	\author{Amnon Rosenmann}
	\email[A.~Rosenmann]{rosenmann@math.tugraz.at}
	\thanks{A.R.~was supported by the Austrian Science Fund (FWF) project p29355}
	
	\date{
		Rev.\SVNRevision, 
		\today}
	
	\begin{abstract}
		We study pairs $(A,B)$ of square matrices that are in additive (resp.\
		multiplicative) finite free position, that is, the characteristic polynomial $\chi_{A+B}(x)$ (resp.\ $\chi_{AB}(x)$) equals the additive finite free convolution $\chi_{A}(x) \addconv \chi_{B}(x)$ (resp. the multiplicative finite free convolution $\chi_{A}(x) \multconv \chi_{B}(x)$),
		which equals the expected characteristic polynomial $\mathbb{E}_U \, [ \chi_{A+U^* BU}(x) ]$ (resp.\ $\mathbb{E}_U \, [ \chi_{AU^* BU}(x) ]$) over the set of unitary matrices $U$.
		We examine the lattice of (non-irreducible) affine algebraic sets of matrices consisting of finite free complementary pairs with respect to the additive (resp.\ multiplicative) convolution.
		We show that these pairs include the diagonal matrices vs.~the principally balanced matrices, the upper (lower) triangular matrices vs.~the upper (lower) triangular matrices with constant diagonal,
		and the scalar matrices vs.~the set of all square matrices.
	\end{abstract}

	\subjclass[2010]{60B20, 46L54}
	\keywords{Random matrices, free probability, polynomial convolutions,
          principal minors}
	\maketitle
	\tableofcontents{}
	\vspace{-15mm}
	\noindent
	\\
	\section{Introduction}
        Finite free probability was introduced by Marcus, Spielman and Srivastava \cite{MSS22} and Marcus \cite{Marcus21} in connection with the solution of the Kadison--Singer conjecture.
        They introduced additive and multiplicative convolutions of characteristic polynomials of matrices over the real and complex numbers. In fact, these convolutions were already studied in the 1920s by Walsh \cite{Walsh22} and Szeg\H{o} \cite{Szego22}, who proved among other results that under certain conditions they preserve real rootedness.
		In \cite{MSS22} explicit formulas for the convolutions were
                introduced, showing that they coincide with  the expectations of the characteristic polynomials of unitarily invariant random matrices, thus linking them to free probability.
		That is, when $A$ and $B$ are normal matrices then the additive convolution of their characteristic polynomials equals the expectation of the characteristic polynomials of $A+U^*BU$, where the expectation is over randomly distributed unitary matrices $U$.
		A similar result holds for normal matrices with respect to multiplicative convolution.
	
		The analogues of these convolutions in the setting of tropical algebra were studied in \cite{RLP19}.
	
		In \cite{Marcus21} Marcus developed further the findings of \cite{MSS22} to reach a new theory of ``finite free probability'', a finite version of free probability that was introduced by Voiculescu \cite{Voicu91}, with many analogous properties. The approach used by Marcus was mainly analytic, through several transforms that are the analogues of the transforms applied in free probability. An alternative combinatorial approach via set partitions was chosen by Arizmendi et al.\  \cite{AP18, AGP21} who established an analogue of Speicher's combinatorial approach to free probability \cite{Speicher94}.
	
		In the present paper we study pairs of complex matrices $A,B$ which are in finite free position (FFP), that is, such that the characteristic polynomial of the sum satisfies the identity $\chi_{A+B}(x)=\chi_{A} \addconv \chi_{B}$, the additive finite free convolution of $\chi_{A}$ and $\chi_{B}$, which further equals the expected characteristic polynomial $\EE_U \, [ \chi_{A+U^* BU}(x) ]$ over the set of unitary matrices $U$. We consider the analogous question in the multiplicative case.
		
%
		The paper is organized as follows.

        In Section~\ref{sec:addFFP} we recall the definition of additive finite free convolution and FFP.

        In Subsection~\ref{subsec:2x2} we study in detail the simplest case of matrices of dimension $2\times 2$ that are in additive FFP and in Subsection~\ref{subsec:3x3} we treat matrices of dimension $3\times 3$ . 

		In Section~\ref{sec:addlattice} we change the point of view from single matrices to the lattice of (non-irreducible) affine algebraic sets of matrices $\mc{V}$ and $\mc{W}$, which are maximal with respect to the property that each matrix in $\mc{V}$ is in additive FFP with each matrix in $\mc{W}$. We say that such families form a finite free complementary pair. We give an upper bound for the additive finite free rank of each such affine algebraic set.

		In Section~\ref{sec:addcomp} we focus on specific complementary pairs: diagonal and principally balanced matrices (matrices with the property that for each $i$ the values of all principal minors of size $i$ coincide); upper triangular and upper triangular matrices with constant diagonal; scalar matrices and the set of all square matrices.

		In Section~\ref{sec:moments}, we examine moments and cumulants of matrices $A$ and $B$ that are in additive FFP and show that the
		moments of $A+B$ can be expressed in terms of the moments of $A$ and $B$, and compute explicit formulas for the first four moments. We also show that a matrix $A$ is in additive FFP with itself if and only if 
		it is the multiple of a unipotent matrix, i.e., the sum of a nilpotent and a scalar matrix.
	
		In Section~\ref{sec:multconv} we extend our analysis to matrices in the
		multiplicative FFP. In particular, we show that the three complementary pairs of (non-irreducible) affine algebraic sets mentioned above form complementary pairs also in the multiplicative case. In general, however, the pairs of matrices that are in additive FFP and those in multiplicative FFP are not the same.
	
	\section{Matrices in additive finite free position}
	\label{sec:addFFP}
        Additive convolution of polynomials can be  defined as follows \cite{MSS22}.
	\begin{definition}
		Let $p(x) = \sum_{k=0}^{n} a_k x^{n-k}$, $q(x) = \sum_{k=0}^{n} b_k x^{n-k}$ be two polynomials of degree $n$ over $\CC$.
		The \textbf{additive convolution} of $p(x)$ and $q(x)$, denoted $p(x) \addconv q(x)$, is
		\begin{align}
			\label{eq:addconv}
			p(x) \addconv q(x)
			&:= \sum_{k=0}^{n} \left(\sum_{i+j=k} \frac{{n-i \choose j}}{{n \choose j}} a_i b_j \right) x^{n-k} \nonumber\\
			&= \frac{1}{n!} \sum_{k=0}^{n} \frac{1}{(n-k)!} \left(\sum_{i+j=k} (n-i)!(n-j)! a_i b_j \right) x^{n-k} \\
			&= \frac{1}{n!} \sum_{k=0}^{n} p^{(k)}(x) q^{(n-k)}(0)
			= \frac{1}{n!} \sum_{k=0}^{n} p^{(k)}(0) q^{(n-k)}(x), \nonumber
		\end{align}
		where we denote by $p^{(k)}$ the $k$-th derivative of $p$ and by $q^{(n-k)}$ the $(n-k)$-th derivative of $q$.
	\end{definition}
	
	\begin{example}
		\label{ex:unit}
		The neutral element with respect to the operation of additive convolution on polynomials of degree $n$ is $x^n$: when $p(x)$ is of degree $n$ and $q(x)=x^n$ then
		$$
		p(x) \addconv q(x) = \frac{1}{n!} \sum_{k=0}^{n} p^{(k)}(x) q^{(n-k)}(0)
		= \frac{1}{n!} p^{(0)}(x) q^{(n)}(0) = \frac{p(x)n!}{n!}=p(x),
		$$
		and similarly, $x^n\addconv p(x) = p(x)$.
	\end{example}
	
	We denote by $\Cnn = \CC^{n \times n}$ the set of $n \times n$ matrices over $\CC$.
	Given a matrix $A \in \Cnn$, its characteristic polynomial is $\chi_A(x) := \det(xI-A)$.
	
	Following is one of the main results of \cite{MSS22}.
	Recall that a signed permutation matrix is a square matrix with each row and each column having exactly one non-zero entry which is either $1$ or $-1$.
\begin{theorem}\cite{MSS22}
  \label{thm:addorthogonal}
  Let $A,B \in \Cnn$ be normal matrices.
  Then 
  \begin{equation}
    \label{eq:addorthogonal}
    \chi_{A}(x) \addconv \chi_{B}(x)
    = \int_{\mc{U}(n)}     \chi_{A+U^* BU}(x) \,dU
    = \frac{1}{2^nn!}\sum_{P\in \mc{P}^{\pm}(n)} \chi_{A+P^{T} BP}(x),
  \end{equation}
  where the expectation is taken over the set of unitary matrices $\mc{U}(n)$ or the signed permutation matrices $\mc{P}^\pm(n)$.
	\end{theorem}
	In the present paper we study pairs of matrices $A$ and $B$ which satisfy equality \eqref{eq:addorthogonal}  without taking the expected values of conjugations of $B$.
	\begin{definition}
		The matrices $A,B \in \Cnn$ are in \textbf{additive finite free position} (or in additive FFP), if 
		$$
		\chi_{A+B}(x) = \chi_{A}(x) \addconv \chi_{B}(x). 
		$$
		Two families $\mc{A},\mc{B} \subseteq \Cnn$ are in additive finite free position (in additive FFP) if $\chi_{A+B}(x) = \chi_{A}(x) \addconv \chi_{B}(x)$ for every $A \in \mc{A}$ and $B \in \mc{B}$. 
	\end{definition}
	\begin{remarks}\label{rem:additiveFFP}
		\begin{enumerate}
             \item []
             \item As shown in \cite{Marcus21}, the property of being in additive FFP can be expressed in terms of mixed discriminants \cite{Bapat89}.
			\item We notice that $\chi_{A}(x) \addconv \chi_{B}(x)$ only depends on the characteristic polynomials of the matrices and not on the matrices themselves.
            \item This is in general not the case for $\chi_{A+B}(x)$, we can only assert that for any pair of matrices $A,B \in \Cnn$, the two leading monomials of $\chi_{A+B}(x)$ and $\chi_{A}(x) \addconv \chi_{B}(x)$ coincide and are equal to $x^n- \trace(A+B)x^{n-1}$.
            \item
            \label{rem:additiveFFP:conj}
            If $A$ and $B$ are in additive FFP then so are $PAP^{-1}$ and $PBP^{-1}$, for any regular matrix $P$.
		\end{enumerate}
	\end{remarks}
	
	\subsection{Matrices of dimension $2 \times 2$}
	\label{subsec:2x2}
	Let us start with an explicit analysis of $2 \times 2$ matrices. 
	\begin{proposition}
		The matrices $A=(a_{ij}), B=(b_{ij}) \in \mathcal{M}_2$ are in additive FFP if and only if $(a_{11} - a_{22}) (b_{22} - b_{11}) = 2(a_{12} b_{21} + a_{21} b_{12})$.
	\end{proposition}	
	\begin{proof}
		Let
		$$
		A = \begin{bmatrix*}[r]
			a_{11} & a_{12} \\
			a_{21} & a_{22}
		\end{bmatrix*}, \qquad \qquad
		B = \begin{bmatrix*}[r]
			b_{11} & b_{12} \\
			b_{21} & b_{22}
		\end{bmatrix*}.
		$$
		Then
		\begin{align*}
			\chi_{A+B}(x) &= x^2 - (a_{11}+a_{22}+b_{11}+b_{22})x \\ 
			&+ (a_{11}+b_{11})(a_{22}+b_{22}) - (a_{12}+b_{12})(a_{21}+b_{21})	
		\end{align*}
		and
		\begin{align*}
			\chi_{A}(x) &\addconv \chi_{B}(x) = x^2 - (a_{11}+a_{22}+b_{11}+b_{22})x \\
			&+ (a_{11} a_{22} - a_{12} a_{21} + b_{11} b_{22} - b_{12} b_{21}) + \frac{1}{2}(a_{11}+a_{22})(b_{11}+b_{22}).
		\end{align*}
		We see that both $\chi_{A}(x) \addconv \chi_{B}(x)$ and $\chi_{A}(x) \addconv
		\chi_{B}(x)$ are monic polynomials with the same linear
                coefficient given by the negative sum of the traces   (in fact, this holds for any two $n \times n$
                matrices, cf.\ Remark~\ref{rem:additiveFFP}).
                Now FFP only depends on the constant terms and it follows that
                $A$ and $B$ are in additive FFP if and only if
		\begin{equation}
			\label{eq:addFFP2x2}
			(a_{11} - a_{22}) (b_{22} - b_{11}) = 2(a_{12} b_{21} + a_{21} b_{12}).
		\end{equation}
	\end{proof}
	Since the property of being in additive FFP is invariant under conjugating both $A$ and $B$ with the same matrix, we can assume that $A$ is of the form 
	$$
	A = \begin{bmatrix*}[c]
		a_{11} & a_{12} \\
		0 & a_{22}
	\end{bmatrix*}.
	$$
	Then condition \eqref{eq:addFFP2x2} for $A$ and $B$ to be in additive FFP becomes:
	\begin{equation}
		\label{eq:addFFPtriang2x2}
		(a_{11} - a_{22}) (b_{22} - b_{11}) = 2a_{12} b_{21}.
	\end{equation}
	
	By \eqref{eq:addFFPtriang2x2}, $A$ is in additive FFP with itself if and only if
	\begin{equation*}
		a_{11} = a_{22},
	\end{equation*}
	that is, the eigenvalues of $A$ satisfy $\lambda_1 = \lambda_2$, i.e.,
        it is the sum of a nilpotent and a scalar matrix.
        This equivalence is true in general, see Theorem~\ref{thm:selfaddfree}.
	
	When $A$ (or $B$) is a scalar matrix then both sides of
        \eqref{eq:addFFP2x2} vanish,
        so that $A$ and $B$ are in additive FFP. Moreover, when $A$ and $B$ are in additive FFP then it is easy to see that equality \eqref{eq:addFFP2x2} holds also for $\lambda_1 A + \lambda_2 I$ and $\mu_1B + \mu_2 I$, for every $\lambda_1, \lambda_2,
    \mu_1, \mu_2 \in \CC$.
    In fact, we have the following more general result. We will see later that it does not hold in higher dimensions, see Remark~\ref{rem:FFP:preserve}.
	\begin{proposition}
		\label{pr:poly2x2}
		Let  $A=(a_{ij}), B=(b_{ij}) \in \mathcal{M}_2$ be in additive FFP. Then $p(A)$ and $q(B)$ are in additive FFP, for any polynomials $p(x)$ and $q(x)$. Moreover, if $A$ is regular then $A^{-1}$ and $q(B)$ are in additive FFP.
	\end{proposition}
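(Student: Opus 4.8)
The plan is to reduce the statement to the explicit criterion \eqref{eq:addFFP2x2} and to exploit the fact that, in dimension two, every polynomial in a matrix is an affine combination of the identity and the matrix itself.

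First I would invoke the Cayley--Hamilton theorem: for $A \in \Ckk{2}$ one has $A^2 = (\tr A)\,A - (\det A)\,I$, so by induction every power of $A$, and hence every polynomial $p(A)$, has the form $p(A) = \alpha I + \beta A$ for suitable $\alpha,\beta \in \CC$ depending on $p$ and $A$. Moreover, if $A$ is regular, rearranging Cayley--Hamilton gives $A^{-1} = \frac{\tr A}{\det A}\,I - \frac{1}{\det A}\,A$, which is again of this form with $\beta = -1/\det A \neq 0$; thus $A^{-1}$ is itself a polynomial in $A$, and the ``moreover'' clause is a special case of the first assertion, so it suffices to prove the first one.

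The key observation is that the three quantities occurring in \eqref{eq:addFFP2x2} transform very simply under $A = (a_{ij}) \mapsto \alpha I + \beta A$: the off-diagonal entries $a_{12}, a_{21}$ are each multiplied by $\beta$, and the difference $a_{11}-a_{22}$ is also multiplied by $\beta$, since the summand $\alpha I$ adds $\alpha$ to both diagonal entries and cancels in the difference. Writing $p(A) = \alpha I + \beta A$ and $q(B) = \gamma I + \delta B$, the criterion \eqref{eq:addFFP2x2} for the pair $(p(A),q(B))$ then reads
\[
  \beta(a_{11}-a_{22})\cdot\delta(b_{22}-b_{11}) = 2\bigl(\beta a_{12}\cdot\delta b_{21} + \beta a_{21}\cdot\delta b_{12}\bigr),
\]
which is precisely $\beta\delta$ times the criterion \eqref{eq:addFFP2x2} for the original pair $(A,B)$. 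Since $A$ and $B$ are in additive FFP, that criterion holds as an equality; multiplying both sides by $\beta\delta$ keeps it an equality, so $p(A)$ and $q(B)$ are in additive FFP.

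I do not expect a real obstacle here; the one point deserving care is that the scalar part $\alpha I$ must genuinely drop out of $a_{11}-a_{22}$, and it is worth emphasizing that this cancellation together with the plain scaling behaviour of the entries is exactly what makes the argument work and is special to the $2 \times 2$ case --- for larger $n$ the analogous preservation fails, as noted in Remark~\ref{rem:FFP:preserve}. (One could also see part of the claim without \eqref{eq:addFFP2x2}, since passing from $A$ to $A + \alpha I$ shifts both $\chi_{A+B}(x)$ and $\chi_A(x)\addconv\chi_B(x)$ by the same translation in $x$; but the scaling step is genuinely two-dimensional, so I would present the one-line computation with \eqref{eq:addFFP2x2} instead.)
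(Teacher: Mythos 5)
Your proof is correct and follows essentially the same route as the paper: Cayley--Hamilton reduces $p(A)$ (and $A^{-1}$) to the form $\alpha I+\beta A$, and the claim then follows from the invariance of the $2\times 2$ criterion \eqref{eq:addFFP2x2} under such affine substitutions. If anything, you are more explicit than the paper, which asserts the $\lambda_1 A+\lambda_2 I$ invariance informally before the proposition and in the proof cites only Proposition~\ref{pr:closedness} (which covers the shift but not the scaling); your computation showing both sides of \eqref{eq:addFFP2x2} scale by $\beta\delta$ closes that small gap.
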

	
	\begin{proof}
          We infer from the Cayley--Hamilton theorem that every holomorphic
          function $f(A)$ can be represented as a linear function of the form
          $a_0I + a_1A$
          and the claim follows from Proposition~\ref{pr:closedness}.
	\end{proof}

	\subsection{Matrices of dimension $3 \times 3$}
	\label{subsec:3x3}
	Let us look at matrices of dimension $3 \times 3$ that are in additive
        FFP. As in the case of matrices of dimension $2 \times 2$, the  two
        leading coefficients  always coincide among the two polynomials,
        so that we obtain two equations for the remaining coefficients.
        In order to avoid equations with many variables, we restrict the
        discussion to diagonal matrices, which reduces the number of variables
        to  six.
	Let
	$$
	A = \begin{bmatrix*}[r]
		\lambda_1 & 0 & 0 \\
		0 &\lambda_2 & 0 \\
		0 & 0 & \lambda_3
	\end{bmatrix*}, \qquad \qquad
	B = \begin{bmatrix*}[r]
		\mu_1 & 0 & 0 \\
		0 &\mu_2 & 0 \\
		0 & 0 & \mu_3
	\end{bmatrix*}.
	$$
	Then
	\begin{align*}
		\chi_{A+B}(x) &= x^3 - ((\lambda_1+\lambda_2+\lambda_3)+(\mu_1+\mu_2+\mu_3)) x^2 \\ 
		&+ ((\lambda_1+\mu_1)(\lambda_2+\mu_2)+(\lambda_1+\mu_1)(\lambda_3+\mu_3)+(\lambda_2+\mu_2)(\lambda_3+\mu_3)) x \\
		&- (\lambda_1+\mu_1)(\lambda_2+\mu_2)(\lambda_3+\mu_3)
	\end{align*}
	and
	\begin{align*}
		\chi_{A}(x) &\addconv \chi_{B}(x) = x^3 - ((\lambda_1+\lambda_2+\lambda_3)+(\mu_1+\mu_2+\mu_3)) x^2 \\
		&+ ((\lambda_1 \lambda_2+\lambda_1 \lambda_3+\lambda_2 \lambda_3) +\frac{2}{3}(\lambda_1+\lambda_2+\lambda_3)(\mu_1+\mu_2+\mu_3) \\
		&+ (\mu_1 \mu_2+\mu_1 \mu_3+\mu_2 \mu_3)) x \\
		&-(\lambda_1 \lambda_2 \lambda_3 + \frac{1}{3}(\lambda_1+\lambda_2+\lambda_3)(\mu_1\mu_2+\mu_1\mu_3+\mu_2\mu_3) \\
		&+ \frac{1}{3}(\lambda_1\lambda_2+\lambda_1\lambda_3+\lambda_2z_3)
		(\mu_1+\mu_2+\mu_3) +\mu_1\mu_2\mu_3).
	\end{align*}
	By comparing the coefficients of $x$ and the free terms of both polynomials we get that the diagonal matrices $A$ and $B$ are in additive FFP if and only if the following two equations are satisfied:
	\begin{equation}
		\label{eq:addFFP3x3a}
		\lambda_1\mu_2+\lambda_1\mu_3+\lambda_2\mu_1+\lambda_2\mu_3+\lambda_3\mu_1+\lambda_3\mu_2 = 2(\lambda_1\mu_1+\lambda_2\mu_2+\lambda_3\mu_3)
	\end{equation}
	and
	\begin{align}
		\label{eq:addFFP3x3b}
		&\lambda_1\mu_1\mu_2+\lambda_1\mu_1\mu_3+\lambda_2\mu_1\mu_2+\lambda_2\mu_2\mu_3+\lambda_3\mu_1\mu_3+
		\lambda_3\mu_2\mu_3 \nonumber \\
		&+ \lambda_1\lambda_2\mu_1+\lambda_1\lambda_2\mu_2+\lambda_1\lambda_3\mu_1+\lambda_1\lambda_3\mu_3+\lambda_2\lambda_3\mu_2+\lambda_2\lambda_3\mu_3 \\
		&= 2(\lambda_1\mu_2\mu_3+\lambda_2\mu_1\mu_3+\lambda_3\mu_1\mu_2+\lambda_1\lambda_2\mu_3+\lambda_1\lambda_3\mu_2+\lambda_2\lambda_3\mu_1) 
		\nonumber.
	\end{align}

	\section{The lattice of additive finite free affine algebraic sets of matrices}
	\label{sec:addlattice}
        In this section we extend the discussion from single matrices to sets
        of matrices in FFP.
        To this end, we consider the set $\mc{V} \subseteq \Cnn$ of all matrices
        that are in additive FFP with every matrix $A$ in a given set
        $\mc{A}$. This set $\mc{V}$ forms a (non-irreducible) affine algebraic set.

	For general $n \times n$ matrices $A=(A_{ij})$ and $B=(B_{ij})$, let 
	$$
	\chi_{A+B}(x) = \sum_{k=0}^{n} c_k(A_{11},A_{12},\ldots,A_{nn},B_{11},B_{12},\ldots,B_{nn}) x^{n-k}
	$$
	and
	$$
	\chi_{A}(x) \addconv \chi_{B}(x) = \sum_{k=0}^{n} d_k(A_{11},A_{12},\ldots,A_{nn},B_{11},B_{12},\ldots,B_{nn}) x^{n-k},
	$$
	where $c_k(A_{11}, \ldots,A_{nn},B_{11},\ldots,B_{nn})$, $d_k(A_{11},\ldots,A_{nn},B_{11},\ldots,B_{nn})$
	are multilinear homogeneous polynomials in the variables $A_{11},\ldots,A_{nn}$, $B_{11},\ldots,B_{nn}$ of degree $k$.
	By definition \ref{eq:addconv} of the additive convolution, it is clear that a coefficient of $x^{n-k}$ in $\chi_{A}(x) \addconv \chi_{B}(x)$ has as summands the coefficient $a_k$ of $\chi_{A}(x)$, as well as the coefficient $b_k$ of $\chi_{B}(x)$. These are also summands of $\chi_{A+B}(x)$.
	Moreover, the coefficients of $x^n$ and of $x^{n-1}$ (1 and $-\trace(A+B)$, respectively) are the same in $\chi_{A}(x) \addconv \chi_{B}(x)$ and in $\chi_{A+B}(x)$.
	That is, two specific matrices $A=(a_{ij}), B=(b_{ij}) \in \Cnn$ are in additive FFP if and only if
	$$
	c_k(a_{11},\ldots,a_{nn},b_{11},\ldots,b_{nn}) = d_k(a_{11},\ldots,a_{nn},b_{11},\ldots,b_{nn}),
	$$
	for $k=2,\ldots,n$.
	
	Suppose now that $A=(a_{ij}) \in \Cnn$ is a specific matrix.
	Then the matrices $B$ that are in additive FFP with $A$ are the zero locus of
	the following set of polynomials over $\CC$:
	\begin{align*}
		p_{A,k}&(B_{11},\ldots,B_{nn}) := \\
		&c_k(a_{11},\ldots,a_{nn},B_{11},\ldots,B_{nn}) - d_k(a_{11},\ldots,a_{nn},B_{11},\ldots,B_{nn}),
	\end{align*}
	for $k=2,\ldots,n$.
	We associate with $A$ the ideal $I_A$ generated by these polynomials:
	$$
	I_A := \langle \, p_{A,2}(B_{11},\ldots,B_{nn}), \ldots, p_{A,n}(B_{11},\ldots,B_{nn})\, \rangle.
	$$
	The (non-irreducible) affine algebraic set defined by $I_A$ is
	\begin{align*}
		\mc{V}(I_A) := \{&B=(b_{ij}) \in \Cnn \, : \\ &p_{A,2}(b_{11},\ldots,b_{nn})=\cdots=p_{A,n}(b_{11},\ldots,b_{nn})=0\},
	\end{align*}
	and this is the set of matrices $B \in \Cnn$ that are in additive FFP with $A$:
	$$\mc{V}(I_A) = \{ B \in \Cnn \, : \, \chi_{A+B}(x) = \chi_{A}(x) \addconv \chi_{B}(x) \}.$$
	We denote by ${\{A\}}^{\addconv}$ the affine algebraic set $\mc{V}(I_A)$. More generally, we define the following.
	\begin{definition}
		Given a set $\mc{A} \subseteq \Cnn$, the \textbf{additive finite free complement} of $\mc{A}$ (or the \textbf{additive finite free affine algebraic set} $\act{\mc{A}}$)
		is the set
		$$
		\act{\mc{A}} := \mc{V}(I_{\mc{A}}) = \{ B \in \Cnn \, : \, 
		B \in \mc{V}(I_A),
		\mbox{ for all } A \in \mc{A} \}
                =\bigcap_{A\in \mc{A}} \mc{V}(I_A)
                ,
		$$
		which is the affine algebraic set of matrices defined by the ideal $I_{\mc{A}}$ that is generated by the set of polynomials $p_{A,k}(B_{11},\ldots,B_{nn})$, for $A \in \mc{A}$ and $k=2,\ldots,n$.
	\end{definition} 
	\begin{definition}
		When $\act{\mc{V}} = \mc{W}$ and $\act{\mc{W}} = \mc{V}$, we say that the affine algebraic sets $\mc{V},\mc{W}$ form an \textbf{additive finite free complementary pair} (an \textbf{additive complementary pair}, for short). 
	\end{definition}
	\noindent Note that when $\mc{W} = \act{\mc{V}}$ then $\mc{W} = \biact{\mc{W}}$.
	
	We show next that every additive finite free (non-irreducible) affine algebraic set of matrices contains the set of scalar matrices. 
	\begin{proposition}
		\label{pr:closedness}
		If $A$ and $B$ are in additive FFP then so are $A + \lambda I$ and $B$, for every $\lambda \in \CC$.
	\end{proposition}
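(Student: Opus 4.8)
The plan is to exploit the fact that adding a scalar matrix $\lambda I$ to $A$ merely translates the characteristic polynomial, together with the observation that additive convolution is equivariant under such translations of the variable. Concretely, for every $M\in\Cnn$ we have the elementary identity
\[
\chi_{M+\lambda I}(x)=\det\bigl((x-\lambda)I-M\bigr)=\chi_M(x-\lambda).
\]
Applying this to $M=A$ gives $\chi_{A+\lambda I}(x)=\chi_A(x-\lambda)$, and applying it to $M=A+B$ gives $\chi_{(A+\lambda I)+B}(x)=\chi_{(A+B)+\lambda I}(x)=\chi_{A+B}(x-\lambda)$.

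Next I would establish the translation-equivariance of the convolution: for monic polynomials $p,q$ of degree $n$ and any $\lambda\in\CC$,
\[
p(x-\lambda)\addconv q(x)=(p\addconv q)(x-\lambda).
\]
This is immediate from the derivative representation $p(x)\addconv q(x)=\frac{1}{n!}\sum_{k=0}^{n}p^{(k)}(x)\,q^{(n-k)}(0)$ in the Definition, since $\frac{d^k}{dx^k}p(x-\lambda)=p^{(k)}(x-\lambda)$, so each summand is just the corresponding summand of $p\addconv q$ evaluated at $x-\lambda$; by symmetry of the convolution the same would hold if $q$ were translated instead.

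Finally I would combine the two facts. Using the hypothesis that $A$ and $B$ are in additive FFP, i.e. $\chi_{A+B}(x)=\chi_A(x)\addconv\chi_B(x)$, we get
\[
\chi_{(A+\lambda I)+B}(x)=\chi_{A+B}(x-\lambda)=(\chi_A\addconv\chi_B)(x-\lambda)=\chi_A(x-\lambda)\addconv\chi_B(x)=\chi_{A+\lambda I}(x)\addconv\chi_B(x),
\]
which is precisely the statement that $A+\lambda I$ and $B$ are in additive FFP. There is no genuinely hard step here; the only points that need a moment's attention are verifying that $\chi_{A+\lambda I}$ is still monic of degree $n$ so that the convolution is defined, and choosing to argue with the derivative formula for $\addconv$ (rather than the coefficient formula), since that is the form in which the variable shift passes through transparently.
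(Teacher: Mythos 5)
Your proof is correct and follows essentially the same route as the paper: both use the identity $\chi_{M+\lambda I}(x)=\chi_M(x-\lambda)$ and then push the translation through the derivative form $\frac{1}{n!}\sum_k p^{(k)}(x)q^{(n-k)}(0)$ of the convolution. The only difference is that you isolate the translation-equivariance of $\addconv$ as a separate lemma, which the paper does inline.
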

	\begin{proof}
		Let $A,B \in \Cnn$.
		Then $\chi_{A+\lambda I}(x) = \det(xI -(A + \lambda I) = \det((x -\lambda) I-A) = \chi_{A}(x-\lambda)$. Similarly, $\chi_{A+\lambda I + B}(x) = \chi_{A+B}(x-\lambda)$. 
		By assumption,
		$\chi_{A}(x)	\addconv \chi_{B}(x) =\chi_{A+B}(x)$ and 
		it follows that
		\begin{align*}
			\chi_{A+\lambda I}(x) &\addconv \chi_{B}(x) = 
			\frac{1}{n!} \sum_{k=0}^{n} \chi_{A+\lambda I}^{(k)}(x) \chi_{B}^{(n-k)}(0) \\
			&=
			\frac{1}{n!} \sum_{k=0}^{n} \chi_{A}^{(k)}(x-\lambda) \chi_{B}^{(n-k)}(0) \\
			&= \chi_{A+B}(x-\lambda) = \chi_{A+\lambda I + B}(x).
		\end{align*}
	\end{proof}
	
	\begin{remark}
          \label{rem:FFP:preserve}
          Proposition~\ref{pr:poly2x2} raises the question whether other algebraic
          operations preserve additive FFP also in higher dimensions.
          The following example shows that the obvious candidates fail:
		The matrices
		$$
		A =
		\begin{bmatrix}
			1 & 0 & 0 \\
			0 & 2 & 0 \\
			0 & 0 & 3 
		\end{bmatrix}, \qquad \qquad
		B =
		\begin{bmatrix}
			1 & -1 & 0 \\
			-1 & 13 & -3 \\
			0 & -3 & 1 
		\end{bmatrix}
		$$
		are in additive FFP but the following pairs are not:
		\begin{enumerate}[(i)]
			\item   $(\lambda A,B)$,  unless   $\lambda\in\{0,1\}$;
			\item  $(A^2,B)$;
			\item  $(A^{-1},B)$.
		\end{enumerate}
		
	\end{remark}
	
	\begin{definition}
		\label{def:addvariety}
		Let $\mc{A} \subseteq \Cnn$ and let $\mc{V} = \biact{\mc{A}}$. Then we say that $\mc{A}$ is a \textbf{generating set} of $\mc{V}$, written $\mc{V} = \langle
		\mc{A} \rangle$.
		We define the \textbf{additive finite free rank} of $\mc{V}$, denoted $\mathrm{ark}(\mc{V})$, to be the minimal cardinality of a generating set of $\mc{V}$.
	\end{definition}
	\begin{proposition}
		Every additive finite free affine algebraic set $\mc{V}\subseteq \Cnn$ is of finite additive finite free rank, with $\displaystyle \mathrm{ark}(\mc{V}) \leq  \sum_{k=1}^{n-1} k! {n \choose k} ^2$.
		\label{pr:rankbound}
	\end{proposition}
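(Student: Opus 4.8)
The plan is to reduce the statement to a finite-dimensionality fact about a space of polynomials in the entries of $B$. Since being in additive FFP is a symmetric relation, $\mc{A}\mapsto\act{\mc{A}}$ is an antitone Galois connection on the power set of $\Cnn$, so $\mc{A}\mapsto\biact{\mc{A}}$ is a closure operator; in particular, since $\mc{V}$ is an additive finite free variety it is of the form $\act{\mc{A}}$, and hence $\biact{\mc{V}}=\mc{V}$ by the remark following the definition of a complementary pair. Consequently, if $\mc{A}_0\subseteq\mc{V}$ is finite and $\act{\mc{A}_0}=\act{\mc{V}}$, then $\biact{\mc{A}_0}=\act{\act{\mc{A}_0}}=\act{\act{\mc{V}}}=\biact{\mc{V}}=\mc{V}$, so $\mc{A}_0$ is a generating set of $\mc{V}$. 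Thus it suffices to produce such an $\mc{A}_0$ of cardinality at most $\sum_{k=1}^{n-1}k!\binom{n}{k}^2$.

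Write $\act{\mc{V}}=\mc{V}(I_{\mc{V}})$, where $I_{\mc{V}}\subseteq\CC[B_{11},\dots,B_{nn}]$ is the ideal generated by all the polynomials $p_{A,k}(B)$ with $A\in\mc{V}$ and $k=2,\dots,n$, and let $\tilde W$ be the $\CC$-linear span of this set of generators. Choose $p_{A_1,k_1},\dots,p_{A_m,k_m}$ forming a $\CC$-basis of $\tilde W$. Then every $p_{A,k}$ is a $\CC$-linear combination of these, so $I_{\mc{V}}=\langle p_{A_1,k_1},\dots,p_{A_m,k_m}\rangle$; setting $\mc{A}_0=\{A_1,\dots,A_m\}\subseteq\mc{V}$ one gets $I_{\mc{A}_0}=I_{\mc{V}}$ (the inclusion $\supseteq$ is clear, and $\subseteq$ holds because $\mc{A}_0\subseteq\mc{V}$), hence $\act{\mc{A}_0}=\act{\mc{V}}$. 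This shows the rank of $\mc{V}$ is at most $\dim_\CC\tilde W$, and it remains to bound $\dim_\CC\tilde W$.

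The crucial observation is that all the $p_{A,k}(B)$, as $A$ ranges over $\Cnn$ and $k$ over $\{2,\dots,n\}$, lie in one fixed finite-dimensional subspace of $\CC[B_{11},\dots,B_{nn}]$. Let $E_\ell(M)$ denote the sum of the $\ell\times\ell$ principal minors of $M$, so $\chi_M(x)=\sum_\ell(-1)^\ell E_\ell(M)x^{n-\ell}$; then up to the sign $(-1)^k$ one has $p_{A,k}(B)=E_k(A+B)-\sum_{i+j=k}\frac{\binom{n-i}{j}}{\binom{n}{j}}E_i(A)E_j(B)$. Expanding $E_k(A+B)=\sum_{|S|=k}\det((A+B)[S])$ by multilinearity in the columns, each principal minor becomes a sum over subsets $T\subseteq S$ of a determinant whose columns indexed by $T$ come from $B$ and whose remaining columns come from $A$; Laplace-expanding such a determinant along its $B$-columns writes it as a combination, with coefficients polynomial in the (fixed) entries of $A$, of minors $\det(B[R,T])$ with $R\subseteq S$ and $|R|=|T|$ (here $B[R,T]$ is the submatrix of $B$ on rows $R$ and columns $T$). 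Hence the part of $E_k(A+B)$ homogeneous of degree $j$ in the entries of $B$ lies in the span of the $\det(B[R,T])$ with $|R|=|T|=j$, while the term $\frac{\binom{n-i}{j}}{\binom{n}{j}}E_i(A)E_j(B)$ contributes only principal such minors. Since the $B$-degree-$0$ and $B$-degree-$k$ components cancel between the two summands, $p_{A,k}(B)$ lies in $W:=\operatorname{span}_\CC\{\det(B[R,T]):1\le|R|=|T|\le n-1\}$.

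Finally we count $\dim_\CC W$. Every monomial occurring in some $\det(B[R,T])$ with $|R|=|T|=j$ has the form $\prod_{i=1}^{j}B_{r_is_i}$ with the $r_i$ pairwise distinct and the $s_i$ pairwise distinct, and it determines $R$ and $T$ uniquely; the number of such monomials is $\binom{n}{j}\cdot\frac{n!}{(n-j)!}=j!\binom{n}{j}^2$. Hence $\dim_\CC W\le\sum_{j=1}^{n-1}j!\binom{n}{j}^2=\sum_{k=1}^{n-1}k!\binom{n}{k}^2$, which bounds $\dim_\CC\tilde W$ and therefore the rank of $\mc{V}$. The substantive step is the third paragraph: a priori $p_{A,k}(B)$ could be an arbitrary polynomial of degree $\le k$ in the $n^2$ entries of $B$, and the content is that its determinantal origin pins it inside the much smaller span of minors of $B$, yielding a finite-dimensional ambient space with a clean dimension count. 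The Galois-connection reduction and the ideal-theoretic step are routine once this is available, though one must handle with care the cancellation of the extreme homogeneous components and the signs appearing in the Laplace expansion.
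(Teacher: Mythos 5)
Your proof is correct and follows essentially the same route as the paper's: both arguments bound the rank by the dimension of the linear span of the defining polynomials $p_{A,k}$, and both obtain the bound $\sum_{k=1}^{n-1}k!\binom{n}{k}^2$ by counting monomials $B_{i_1j_1}\cdots B_{i_lj_l}$ with pairwise distinct row indices and pairwise distinct column indices. Your version is marginally tighter in execution --- you bypass the paper's Noetherian step by extracting a linear basis directly, and your observation that each $p_{A,k}$ actually lies in the span of the minors $\det(B[R,T])$ would even yield the sharper bound $\sum_{k=1}^{n-1}\binom{n}{k}^2$ --- but the underlying idea is the same.
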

	\begin{proof} Let $\mc{A}$ be a generating set of $\mc{V}$, that is, $\mc{V} = \biact{\mc{A}}$, and let $I_{\mc{A}}$ be the corresponding ideal. That is, $I_{\mc{A}}$ is the sum of the ideals $I_A$, for $A \in  \mc{A}$, where $I_{\mc{A}}$ is generated by the finitely many polynomials $p_{A,k}(B_{11},\ldots,B_{nn})$. Since $I_{\mc{A}}$ is an ideal in a Noetherian ring, it is finitely generated. It follows that there exists a finite set $\mc{A'} \subseteq \mc{A}$, such that $I_{\mc{A}}=I_{\mc{A'}}$. But then $\mc{V} = \biact{\mc{A}} = \biact{\mc{A'}}$, that is $\mc{V} = \langle \mc{A'} \rangle$.
		
		Each polynomial $p_{A,k}(B_{11},\ldots,B_{nn})$, for $k=2,\ldots,n$, is of degree at most $k-1$, since the part that is of degree $k$ does not involve the elements of $A$ and is the same in $c_k$ and $d_k$, and therefore vanishes in $p_{A,k}$. Similarly, the part of degree 0 comes just from $A$ and also vanishes in $p_{A,k}$. 
		The monomials in $p_{A,k}(B_{11},\ldots,B_{nn})$ are then of degree $1 \le d \le n-1$ and of the form $B_{i_1 j_1}B_{i_2 j_2}\cdots B_{i_l j_l}$, where $\{i_1, \ldots, i_l\}$ and $\{j_1, \ldots, j_l\}$ are subsets of size $l$ of $\{1, \ldots, n\}$ (by the way the determinant is defined). It follows that the generators of the ideal $I_{\mc{A}}$ span linearly a subspace of a vector space of dimension
		$$
		\sum_{k=1}^{n-1} {n \choose k} \frac{n!}{(n-k)!} = \sum_{k=1}^{n-1} k! {n \choose k} ^2,
		$$
		which gives an upper bound on the number of generators of $I_{\mc{A}}$ and, thus, on $\mathrm{ark}(\mc{V})$.
	\end{proof}
	\begin{remark}
		The examples given in this paper suggest that the affine algebraic sets are probably of additive finite free rank much smaller than the bound given in Proposition \ref{pr:rankbound}, more likely of order $\mc{O}(n^2)$, e.g., the Krull dimension of the coordinate ring $\CC[B_{11},\ldots,B_{nn}]/I_A$, possibly even that 
		$\mathrm{ark}(\mc{V}) + \mathrm{ark}(\mc{W})) = n^2-1$, for affine algebraic sets $\mc{V} \subseteq \Cnn$ and $\mc{W} \subseteq \Cnn$ that form an additive complementary pair. 
	\end{remark}
	The collection of additive finite free (non-irreducible) affine algebraic sets forms a \textbf{lattice}
	$(\LL_n, \vee, \wedge, \Sn{I}, \Cnn)$
	under set inclusion, where: \\
	\begin{itemize}
		\item The set $\PBDD$ of scalar matrices is the bottom element $\bot$ of $\LL$ and the set $\Cnn = \CC^{n \times n}$ is the top element $\top$.
		\item For every $\mc{V}, \mc{W}$ in $\LL_n$, their \textbf{join} $\mc{V} \vee \mc{W} := \langle \mc{V} \cup \mc{W} \rangle$ and their \textbf{meet} $\mc{V} \wedge \mc{W} := \mc{V} \cap \mc{W}$ are in $\LL_n$. 
	\end{itemize} 
	Some of the properties of the lattice $\LL_n$ that are easy to verify are:
	\begin{enumerate}
		\item $\LL_n$ is associative, commutative and distributive with respect to $\vee$ and $\wedge$.
		\item $\mc{V} \wedge \bot = \bot$ and $\mc{V} \vee \bot = \mc{V}$, for every $\mc{V} \in \LL_n$.
		\item $\mc{V} \wedge \top = \mc{V}$ and $\mc{V} \vee \top = \top$,	for every $\mc{V} \in \LL_n$.
		\item $(\mc{V} \vee \mc{W})^{\,\addconv} = \act{\mc{V}} \wedge \act{\mc{W}}$ and $(\mc{V} \wedge \mc{W})^{\,\addconv} = \act{\mc{V}} \vee \act{\mc{W}}$, for every $\mc{V}, \mc{W} \in \LL_n$ (De Morgan laws).
	\end{enumerate}
	\begin{remarks}
		\begin{enumerate}
			\item []
			\item The complement of an affine algebraic set in $\LL_n$ does not obey the standard definition of a complement in a lattice, that is, it does not hold in general that $\mc{V} \vee \act{\mc{V}} = \top$ and $\mc{V} \wedge \act{\mc{V}} = \bot$.
			\item Since every additive finite free affine algebraic set contains the subset of scalar matrices, we could have defined the lattice in the quotient space $\MM_n := \Cnn / \PBDD$. 
		\end{enumerate}
	\end{remarks}
	Let us look at the special case of $2 \times 2$ matrices.
	\begin{proposition}
		Let the affine algebraic sets $\mc{V}, \mc{W} \subseteq \mathcal{M}_2$ form an additive complementary pair. Then
		\begin{enumerate}
			\item $\mc{V}, \mc{W}$ are subspaces of $\mathcal{M}_2$;
			\item $\mathrm{ark}(\mc{V}) + \mathrm{ark}(\mc{W}) = 3$.
		\end{enumerate}
	\end{proposition}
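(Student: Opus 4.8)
The plan is to reduce the whole question to linear algebra on $\CC^{3}$. The first step is to attach to each $A=(a_{ij})\in\Ckk{2}$ the vector $v(A):=(a_{11}-a_{22},\,a_{12},\,a_{21})\in\CC^{3}$; the map $v\colon\Ckk{2}\to\CC^{3}$ is linear and surjective, with kernel $\ker v=\PBDD$, the line of scalar matrices. Rewriting the FFP criterion \eqref{eq:addFFP2x2} as $(a_{11}-a_{22})(b_{11}-b_{22})+2a_{12}b_{21}+2a_{21}b_{12}=0$, I would observe that $A$ and $B$ are in additive FFP precisely when $\beta(v(A),v(B))=0$, where $\beta$ is the symmetric bilinear form on $\CC^{3}$ with Gram matrix $\left(\begin{smallmatrix}1&0&0\\0&0&2\\0&2&0\end{smallmatrix}\right)$; this matrix has determinant $-4$, so $\beta$ is non-degenerate, and that single fact powers both parts.

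For (1), given any $\mc{A}\subseteq\Ckk{2}$ I would set $S:=v(\mc{A})\subseteq\CC^{3}$ and note that $\act{\mc{A}}=v^{-1}(S^{\perp})$ — the preimage of a linear subspace under a linear map, hence itself a linear subspace of $\Ckk{2}$ (automatically containing $\PBDD=\ker v$). Applying this to a complementary pair, both $\mc{V}=\act{\mc{W}}$ and $\mc{W}=\act{\mc{V}}$ are of this form, so they are subspaces.

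For (2), I would first upgrade the computation: since $\beta$ is non-degenerate, $(S^{\perp})^{\perp}=\mathrm{span}\,S$ for every $S\subseteq\CC^{3}$, and since $v$ is onto, $v(v^{-1}(T))=T$ for every $T\subseteq\CC^{3}$; combining these gives $\biact{\mc{A}}=v^{-1}(\mathrm{span}\,v(\mc{A}))$. Writing $\mc{V}=v^{-1}(V')$ with $V':=v(\mc{V})$ (legitimate by (1)), a set $\mc{A}\subseteq\Ckk{2}$ is then a generating set of $\mc{V}$ exactly when $\mathrm{span}\,v(\mc{A})=V'$; lifting a basis of $V'$ through the surjection $v$ exhibits a generating set of size $\dim V'$, and clearly no fewer than $\dim V'$ vectors can span $V'$, so $\mathrm{rank}(\mc{V})=\dim V'$, and likewise $\mathrm{rank}(\mc{W})=\dim W'$ with $W':=v(\mc{W})$. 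The complementary-pair hypothesis gives $\mc{V}=\act{\mc{W}}=v^{-1}\bigl((v(\mc{W}))^{\perp}\bigr)$, hence $V'=(W')^{\perp}$, and by non-degeneracy of $\beta$ we conclude $\dim V'+\dim W'=3$.

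The step I expect to need the most care is the identification $\mathrm{rank}(\mc{V})=\dim V'$: the rank is defined as the least size of a generating \emph{subset of $\Ckk{2}$}, not of a spanning subset of $\CC^{3}$, so one must pass between the two worlds cleanly using surjectivity of $v$ together with the inclusion $\ker v\subseteq\mc{V}$ (which holds because $\mc{V}$, being a complement, is a subspace containing the scalars). Everything else is routine bookkeeping with the non-degenerate form $\beta$.
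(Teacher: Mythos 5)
Your proof is correct, and it reorganizes the argument in a way that is genuinely tighter than the paper's. The paper works directly with the linear equations \eqref{eq:lineq2x2}: for part (1) it observes that for fixed $B$ the FFP condition is linear homogeneous in the entries of $A$, and for part (2) it row-reduces a basis of $\mc{W}$ against the identity matrix, counts equations, and asserts (without proof) that the resulting equations are independent, arriving at $\mathrm{rank}(\mc{V})=\dim(\mc{V})-1$ and the total $5-2=3$. You instead factor everything through the quotient by the scalars: the map $v(A)=(a_{11}-a_{22},a_{12},a_{21})$ identifies $\Ckk{2}/\PBDD$ with $\CC^3$, and condition \eqref{eq:addFFP2x2} becomes vanishing of a \emph{non-degenerate} symmetric bilinear form $\beta$. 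This buys you two things the paper leaves implicit: the independence of the equations attached to a basis of $\mc{W}$ is exactly non-degeneracy of $\beta$ (so the dimension count $\dim V'+\dim W'=3$ is automatic), and the identity $\mathrm{rank}(\mc{V})=\dim v(\mc{V})=\dim(\mc{V})-1$ follows cleanly from $\biact{\mc{A}}=v^{-1}(\mathrm{span}\,v(\mc{A}))$ rather than from an ad hoc discussion. You correctly flag the one delicate point, namely translating the rank (minimal generating subset of $\Ckk{2}$) into a spanning-set count in $\CC^3$, and your handling of it via surjectivity of $v$ and $\ker v\subseteq\mc{V}$ is sound. The two proofs reach the same conclusion by the same underlying bilinearity, but yours is the more robust packaging.
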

	\begin{proof}
		\begin{enumerate}
			\item By \eqref{eq:addFFP2x2}, the elements of the matrices $A \in \mc{V}$, that are in additive FFP with a matrix $B=(b_{ij}) \in \mc{W}$, are the solution set of the linear homogeneous equation
			\begin{equation}
				\label{eq:lineq2x2}
				(b_{22} - b_{11})A_{11} - (b_{22} - b_{11})A_{22} -2_{21}A_{12}  - 2b_{12}A_{21} = 0
			\end{equation}
			and thus form a subspace of $\mathcal{M}_2$. It follows that $\mc{V}$, and similarly $\mc{W}$, are subspaces of $\mathcal{M}_2$.
			\item For each matrix $B \in \mc{W}$, we obtain a linear equation of the form \eqref{eq:lineq2x2}. Hence, it is sufficient to find the matrices $A$ that are in additive FFP with respect to a linear basis of the subspace $\mc{W}$. One of the elements of $\mc{W}$ (written as a row vector) is $(b_{11}, b_{22}, b_{12}, b_{21}) = (1,1,0,0)$, representing the identity matrix. Thus, by reduction through this row, we may assume that the other basis elements have zero in their first entry. When $(b_{11}, b_{22}, b_{12}, b_{21}) = (1,1,0,0)$ then Equation \eqref{eq:lineq2x2} vanishes, and so, we get that $\mc{V}$ is the set of solutions of an homogeneous system of linear equations of the form
			\begin{equation}
				\label{eq:sys2x2}
				c_{1}A_{11} - c_{1}A_{22} + c_{2}A_{12} + c_{3}A_{21} = 0.
			\end{equation}
			The number of these equations is $\dim (\mc{W})-1$ (since the identity matrix does not contribute an equation). Moreover, since these equations are independent, we cannot take a smaller number of equations in order to define $\mc{V}$. A basis for $\mc{V}$ (a basis for the solution of the system of equations) consists of $4 - (\dim (\mc{W})-1) = 5 - \dim (\mc{W})$ elements. It follows from the above discussion that
			$\mathrm{a rk}(\mc{V}) = \dim (\mc{V}) - 1$,
			$\mathrm{ark}(\mc{W}) = \dim (\mc{W}) - 1$ (after omitting the identity matrix), and 
			\begin{equation*}
				\label{eq:rank2x2}
				\mathrm{ark}(\mc{V}) + \mathrm{ark}(\mc{W}) = 5 - 2 = 3.
			\end{equation*}
		\end{enumerate}
	\end{proof}
	
\section{Additive finite free complementary pairs}
\label{sec:addcomp}

In this section we present pairs of (non-irreducible) affine algebraic sets
$(\mc{V}, \mc{W}) \subseteq \Cnn \times \Cnn$ that form additive complementary
pairs. These are: $(\DD,\PB)$, the set of diagonal matrices and the set of
principally balanced matrices; $(\PBUT,\UT)$ (resp.\ $(\PBLT,\LT)$), the set of
upper (resp.\ lower)
triangular matrices with constant diagonal and the set of upper (resp.\ lower)
triangular matrices; $(\Sn{S},\Cnn)$, the set of scalar matrices and the set of
all $n \times n$ matrices.
	
	We will make frequent use of the following lemma. We denote by $E_{kl} \in \Cnn$ the matrix with 1 at position $(k,l)$ and 0 elsewhere.
	\begin{lemma}
		\label{lem:Ekl}
		Let $A=(a_{ij}) \in \Cnn$ be a matrix having an off-diagonal non-zero entry $a_{lk}$. Then $\chi_{A+E_{kl}}(x) \neq \chi_{A}(x) = \chi_{A}(x) \addconv \chi_{E_{kl}}(x)$.
	\end{lemma}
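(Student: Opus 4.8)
The idea is to compute both sides of the claimed (in)equality explicitly. The equality $\chi_A(x) = \chi_A(x) \addconv \chi_{E_{kl}}(x)$ is immediate: $E_{kl}$ is nilpotent of rank one with $k \neq l$, hence $\chi_{E_{kl}}(x) = x^n$, and by Example~\ref{ex:unit} the polynomial $x^n$ is the neutral element for $\addconv$, so $\chi_A(x) \addconv x^n = \chi_A(x)$. So the whole content of the lemma is the strict inequality $\chi_{A+E_{kl}}(x) \neq \chi_A(x)$, i.e.\ adding the rank-one nilpotent $E_{kl}$ to $A$ genuinely changes the characteristic polynomial whenever the ``transpose-position'' entry $a_{lk}$ is nonzero.

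The cleanest route is to track a single coefficient. Write $\chi_{A+tE_{kl}}(x) = \det(xI - A - tE_{kl})$ and expand in $t$. Since $E_{kl}$ has rank one, $\det(xI - A - tE_{kl})$ is an \emph{affine} function of $t$: $\chi_{A+tE_{kl}}(x) = \chi_A(x) - t\,(\text{cofactor term})$. Concretely, expanding the determinant along the column (or row) that $tE_{kl}$ touches, the coefficient of $t$ is $-(-1)^{k+l} M_{lk}(x)$, where $M_{lk}(x)$ is the $(l,k)$-minor of $xI - A$ (delete row $l$, column $k$). Thus it suffices to show this minor is a nonzero polynomial in $x$ when $a_{lk} \neq 0$; then setting $t=1$ gives $\chi_{A+E_{kl}}(x) \neq \chi_A(x)$. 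The key observation is that in the matrix $xI - A$, the entry in position $(k,l)$ (which survives after deleting row $l$ and column $k$, since $k \neq l$) equals $-a_{kl}$... no: one must be careful which transpose. Let me instead look at the coefficient of the highest power of $x$ in the minor: deleting row $l$ and column $k$ from $xI - A$ leaves a matrix whose diagonal entries are $x - a_{ii}$ for $i \notin \{k,l\}$, together with off-diagonal junk; the product of those $n-2$ diagonal terms contributes $x^{n-2}$, but other terms could cancel it. The robust statement is: the coefficient of $x^{n-2}$ in $\pm M_{lk}(x)$ is, up to sign, exactly $a_{lk}$ — one checks that the only way to get degree $n-2$ from the minor matrix is to pick the two ``missing diagonal'' contributions, and the unique monomial achieving this involves the single off-diagonal entry $-a_{lk}$ sitting at position $(l,k)$ of $xI-A$ (which after the deletion lands on what was a diagonal slot). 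Hence $M_{lk}(x)$ has degree exactly $n-2$ with leading coefficient $\pm a_{lk} \neq 0$, so it is a nonzero polynomial, and the coefficient of $x^{n-2}$ in $\chi_{A+E_{kl}}(x) - \chi_A(x)$ is $\pm a_{lk} \neq 0$.

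I expect the only real subtlety is the bookkeeping in the previous paragraph: correctly identifying that after deleting row $l$ and column $k$ of $xI - A$, the entry $-a_{lk}$ of the original matrix moves into a position that behaves like a diagonal entry for degree-counting purposes, and confirming that no other term of the minor's permutation expansion reaches degree $n-2$. A clean way to package this is to note $\chi_{A+E_{kl}}(x) - \chi_A(x) = -\bigl(\chi_A(x)\bigr)' \cdot 0 + \ldots$ — better, to avoid sign errors I would just say: by multilinearity of the determinant in the $(k,l)$ entry, $\chi_{A+E_{kl}}(x) - \chi_A(x)$ equals $(-1)$ times the cofactor $C_{lk}(xI-A)$, and the claim reduces to $C_{lk}(xI-A) \not\equiv 0$, which follows by inspecting its degree-$(n-2)$ coefficient as above. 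This also makes transparent why the hypothesis is on $a_{lk}$ and not $a_{kl}$.
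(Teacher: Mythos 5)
Your proposal is correct and ultimately lands on the same decisive fact as the paper, namely that the coefficient of $x^{n-2}$ in $\chi_{A+E_{kl}}(x)-\chi_{A}(x)$ equals $\pm a_{lk}\neq 0$, but it reaches that fact by a longer route. The paper's proof simply uses that the $x^{n-2}$ coefficient of a characteristic polynomial is the sum of all $2\times 2$ principal minors, that adding $E_{kl}$ alters only the principal minor on the index set $\{k,l\}$, and that this minor changes by $(a_{kk}a_{ll}-(a_{kl}+1)a_{lk})-(a_{kk}a_{ll}-a_{kl}a_{lk})=-a_{lk}$; no cofactor expansion or degree count is needed. Your rank-one perturbation argument --- $\det(xI-A-tE_{kl})$ is affine in $t$ with slope a fixed cofactor of $xI-A$ --- is a valid and somewhat more structural alternative (it exhibits the entire difference polynomial as a single $(n-1)\times(n-1)$ minor), but it forces exactly the bookkeeping you wrestle with, and there you do slip: the perturbed entry sits at position $(k,l)$, so the relevant cofactor is the one obtained by deleting row $k$ and column $l$, not row $l$ and column $k$ as written. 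It is in that minor that the entry $-a_{lk}$ of $xI-A$ survives; the unique permutation of the remaining indices with $n-2$ fixed points must send $l\mapsto k$, which is what produces the leading coefficient $\pm a_{lk}$ and explains why the hypothesis concerns $a_{lk}$ rather than $a_{kl}$. With that index correction your degree count is sound, and the first half of your argument (that $\chi_{E_{kl}}(x)=x^n$ is the neutral element for $\addconv$, via Example~\ref{ex:unit}) coincides with the paper's.
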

	\begin{proof}
		Let us look at the coefficient of $x^{n-2}$ in $\chi_{A+E_{kl}}(x)$ and in $\chi_{A}(x)$. The only term that is affected by the addition of $E_{kl}$ to $A$ is the principal minor with indices $k,l$. The difference is
		$$
		(a_{kk}a_{ll}-(a_{kl}+1)a_{lk}) -
		(a_{kk}a_{ll}-a_{kl}a_{lk}) = -a_{lk}\neq 0.
		$$
		It follows that $\chi_{A+E_{kl}}(x) \neq \chi_{A}(x)$.
		On the other hand, $\chi_{E_{kl}}(x) = x^n$ and by Example~\ref{ex:unit}, $\chi_{A}(x) \addconv \chi_{E_{kl}}(x) = \chi_{A}(x)$.
	\end{proof}
	
	\subsection{Diagonal and principally balanced matrices}
        Consider first diagonalizable matrices.
        Because of invariance under conjugation
        (Remark~\ref{rem:additiveFFP} (4)), we may assume that one of the matrices is diagonal.
        It turns out that the following concept is crucial.
	\begin{definition}
		A matrix $B \in \Cnn$ is called \textbf{principally balanced} (or has the \textbf{symmetrized principal minors} property \cite{HO17}) if, for every $i$, $1 \le i \le n$, the values of all principal minors of $B$ of order $i$ coincide.
		We denote by $\PB \subseteq \Cnn$ the family of $n \times n$ matrices which are principally balanced.
        \end{definition}
        This is a special case of the principal minor assignment problem
        \cite{GriffinTsatsomeros:2006:principal2,RisingKuleszaTaskar:2015:efficient}
        going back to Stouffer \cite{Stouffer:1924:independence}.
        
	It it easy to see that the family of principally balanced matrices is
        invariant under conjugation by permutation matrices and diagonal matrices.
	Clearly, all $1 \times 1$ matrices are principally balanced, as well as all $2 \times 2$ matrices with constant diagonal and all $n \times n$ triangular matrices with constant diagonal.
	Here are examples of specific principally balanced matrices.
	\begin{example}
		The following $3 \times 3$ matrix is principally balanced: all principal minors of order $1$ (the diagonal entries) equal $1$ and all principal minors of order $2$ equal $-11$:
		$$
		\begin{bmatrix*}[r]
			1 & 2 & 3 \\
			6 & 1 & -12 \\
			4 & -1 & 1	
		\end{bmatrix*}.
		$$
	\end{example}
	
	\begin{example}
		Let $A$ be the $n \times n$ matrix with entries $a_{ij}=\frac{i}{j}$, $i=1,\ldots,n$:
		$$
		A =
		\begin{bmatrix*}[r]
			1 & \frac{1}{2} & \frac{1}{3} \cdots & \frac{1}{n}\\[1mm]
			2 & 1 & \frac{2}{3} \cdots & \frac{2}{n}\\[1mm]
			3 & \frac{3}{2} & 1 \cdots & \frac{3}{n}\\[-1mm]
			\vdots & &  \ddots & \vdots \\[-1mm]
			n & \frac{n}{2} & \frac{n}{3} \cdots & 1	
		\end{bmatrix*}.
		$$
		The rank of $A$ is 1, so all minors (not just principal) of order $k \geq 2$ equal 0, while the principal minors of order 1 equal 1. Thus, $A$ is principally balanced.
		
		More generally, let $A_i$ denote the $i$-th row of $A$, $i=1,\ldots,n$. When
		all entries of $A_1$ are non-zero and $A_i = \frac{a_{11}}{a_{1i}}A_1$,
		$i=1,\ldots,n$, then $A$ is principally balanced with all principal minors of
		order $1$ equal $a_{11}$ and all minors of order $k \geq 2$ equal $0$. In fact,
		it is easy to show that for any positive integer $k$, $A^k=(a_{11}n)^{k-1} A$
		and so, for any polynomial $p(x)$ with $p(0)=0$, $p(A)$ is a scalar multiple of $A$ and thus of the same form as $A$ (just that the principal minors of order 1 are multiplied by $\lambda$) and, in particular, is principally balanced. Moreover, $p(A)$ is principally balanced for every polynomial $P$ since the set of principally balanced matrices is closed under addition of a scalar matrix by Proposition~\ref{pr:closedness} and Theorem~\ref{thm:addcomp_1}.
	\end{example}
	\begin{lemma}
		\label{lem:prbalanced}
		Let $B \in \PB$ be principally balanced and let $m_i$, for $i=0,\ldots,n$, be its $i \times i$ principal minor, where $m_0$ is defined to be $1$.
		Then $\chi_B (x) = \sum_{i=0}^{n} (-1)^i {n \choose i} m_i x^{n-i}$.
	\end{lemma}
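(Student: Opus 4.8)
The plan is to use the classical expansion of the characteristic polynomial in terms of principal minors, and then invoke the principally balanced hypothesis. Recall that for any $B \in \Cnn$ one has
$$
\chi_B(x) = \det(xI - B) = \sum_{i=0}^{n} (-1)^i e_i(B)\, x^{n-i},
$$
where $e_i(B)$ denotes the sum of all ${n \choose i}$ principal minors of $B$ of order $i$ (equivalently, the $i$-th elementary symmetric function of the eigenvalues of $B$), with the convention $e_0(B) = 1$.

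First I would prove this expansion by multilinearity of the determinant in its columns. Writing the $j$-th column of $xI - B$ as $x\,\mathbf{e}_j - \mathbf{b}_j$, where $\mathbf{e}_j$ is the $j$-th standard basis vector and $\mathbf{b}_j$ the $j$-th column of $B$, the determinant expands as a sum over subsets $S \subseteq \{1,\dots,n\}$ of the determinant of the matrix whose $j$-th column is $-\mathbf{b}_j$ for $j \in S$ and $x\,\mathbf{e}_j$ for $j \notin S$. Pulling the scalar $x^{n-|S|}$ out of the columns indexed by the complement of $S$ and the sign $(-1)^{|S|}$ out of those indexed by $S$, and then repeatedly expanding along the columns of the form $\mathbf{e}_j$ (each of which deletes row $j$ and column $j$), that determinant equals the principal minor of $B$ on the index set $S$. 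Grouping subsets by their cardinality $i = |S|$ yields the displayed formula.

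Next, since $B$ is principally balanced, for each $i$ all ${n \choose i}$ principal minors of order $i$ equal the common value $m_i$, so $e_i(B) = {n \choose i} m_i$; this is also consistent with $e_0(B) = 1 = {n \choose 0} m_0$. Substituting into the expansion gives
$$
\chi_B(x) = \sum_{i=0}^{n} (-1)^i {n \choose i} m_i\, x^{n-i},
$$
as claimed.

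There is essentially no obstacle here: the only content is the standard principal-minor expansion of $\det(xI-B)$, which is routine, after which the principally balanced hypothesis collapses each coefficient to a single term times a binomial coefficient. If one prefers, the expansion may simply be cited as a classical fact (it is, up to sign, the statement that the coefficients of $\chi_B$ are the elementary symmetric functions of the eigenvalues of $B$), reducing the argument to the final substitution.
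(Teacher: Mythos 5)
Your proof is correct and follows the same route as the paper: expand $\chi_B(x)$ via the standard principal-minor formula for the coefficients and then use the principally balanced hypothesis to replace the sum of the ${n \choose i}$ order-$i$ principal minors by ${n \choose i} m_i$. The paper simply takes the principal-minor expansion as known, whereas you also spell out its proof by multilinearity, which is fine but not needed.
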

	\begin{proof}
		For each $i$, the coefficient of $x^{n-i}$ in $\chi_B (x)$ is $(-1)^i$ times the sum of all principal minors of order $i$.
		There are ${n \choose i}$ such principal minors and each one equals $m_i$. 	
	\end{proof}

        \begin{remark}
          The family of principally balanced matrices can be equivalently defined by the
          property of having equal ``cycle sums'' of the same order:
          $c_I = c_J$, for all $I,J \subseteq [n]$ with $\vert I \vert = \vert J
          \vert$,
          where $$c_I=\sum_{\substack{I=\{i_1,\ldots,i_k\} \\ i_1=\min I}}
          a_{i_1, i_2}\cdots  a_{i_{k-1}, i_k} a_{i_k, i_1},$$
          are the cycle sums of a matrix $A=(a_{ij})$,
          see \cite{HO17}.
          Indeed the authors prove that principal minors
          and cycle sums are related by M\"{o}bius inversion on the lattice of set
          partitions.

          This can be interpreted as follows.
          Let $B\in \PB$ be a principally balanced matrix with principal minors
          $m_k$
          and cycle sums $c_k$.
          Consider $\tilde{m}_k = (-1)^k m_k$ as moments of a formal random
          variable (or ``umbra'' in the sense of Rota \cite
          {RotaShen:2000:combinatorics}) $Z$,
          then
          $$
          \chi_B(x) = \EE(x+Z)^n 
          $$
          and  the corresponding classical
          cumulants of $Z$ are the numbers $\tilde{c}_k = (-1)^k c_k$.
        \end{remark}

	We denote by $[n]$ the set $\{1,\ldots,n\}$.
	When $M \in \Cnn$ and $S \subseteq [n]$ with $|S|=k$, we denote by $M_S$ the $k \times k$ principal submatrix of $M$ obtained after removing the rows and columns with indices in $[n]-S$.
	\begin{theorem}
		\label{thm:addcomp_1}
		Let $\DD \subseteq \Cnn$ be the family of $n \times n$ diagonal matrices and let $\PB \subseteq \Cnn$ be the family of $n \times n$ principally balanced matrices.
		Then $\DD$ and $\PB$
		form an additive complementary pair.
	\end{theorem}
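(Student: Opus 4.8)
The plan is to prove the two equalities $\act{\DD}=\PB$ and $\act{\PB}=\DD$; together these are exactly the assertion that $(\DD,\PB)$ is an additive complementary pair. The backbone will be the following characterization, which I would establish first as a lemma: a matrix $B\in\Cnn$ is in additive FFP with \emph{every} diagonal matrix if and only if $B$ is principally balanced.

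To prove the characterization I would fix $D=\operatorname{diag}(d_1,\dots,d_n)$ and expand both $\chi_{B+D}(x)$ and $\chi_{B}(x)\addconv\chi_{D}(x)$ as polynomials in $x$ whose coefficients are polynomials in $d_1,\dots,d_n$; since $\CC$ is infinite, the FFP identity for all $D$ is equivalent to the equality of these two polynomials. By multilinearity of the determinant in the columns, the coefficient of $x^{n-k}$ in $\chi_{B+D}(x)$ equals $(-1)^{k}\sum_{U\subseteq[n]}\bigl(\sum_{|T|=k-|U|,\;T\cap U=\emptyset}\det B_{T}\bigr)\prod_{j\in U}d_{j}$, where $B_{T}$ is the principal submatrix of $B$ on index set $T$; and since $\chi_{D}(x)=\prod_{j}(x-d_{j})$, the convolution formula~\eqref{eq:addconv} gives that the coefficient of $x^{n-k}$ in $\chi_{B}(x)\addconv\chi_{D}(x)$ equals $(-1)^{k}\sum_{l}\tfrac{\binom{n-l}{k-l}}{\binom{n}{k-l}}\,\sigma_{l}(B)\,e_{k-l}(d_{1},\dots,d_{n})$, where $\sigma_{l}(B)$ denotes the sum of all $l\times l$ principal minors of $B$ and $e_{m}$ the $m$-th elementary symmetric polynomial. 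All monomials in the $d_{j}$ that occur are square-free, so matching the coefficients of each $\prod_{j\in U}d_{j}$ — together with the elementary identity $\binom{n-l}{|U|}/\binom{n}{|U|}=\binom{n-|U|}{l}/\binom{n}{l}$ — shows that $B$ is in additive FFP with all diagonal matrices if and only if
\[
\sum_{\substack{|T|=l\\ T\cap U=\emptyset}}\det B_{T}\;=\;\frac{\binom{n-|U|}{l}}{\binom{n}{l}}\,\sigma_{l}(B)
\qquad\text{for all } U\subseteq[n],\ l\ge 0
\]
(both sides vanishing when $l+|U|>n$).

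One direction of the lemma is then immediate: if $B$ is principally balanced with $\det B_{T}=m_{l}$ for every $l$-set $T$ (notation as in Lemma~\ref{lem:prbalanced}), both sides above equal $\binom{n-|U|}{l}m_{l}$, so the identities hold; this gives $\PB\subseteq\act{\DD}$ and, since additive FFP is a symmetric relation, also $\DD\subseteq\act{\PB}$. For the converse I would set $c_{l}:=\sigma_{l}(B)/\binom{n}{l}$ and invert the system by Möbius inversion on the Boolean lattice: for a fixed $l$-set $T_{0}$, writing $[T_{0}\subseteq T]=\prod_{v\in T_{0}}\bigl(1-[v\notin T]\bigr)$ and expanding yields
\[
\det B_{T_{0}}
=\sum_{R\subseteq T_{0}}(-1)^{|R|}\!\!\sum_{\substack{|T|=l\\ T\cap R=\emptyset}}\!\!\det B_{T}
=c_{l}\sum_{r=0}^{l}(-1)^{r}\binom{l}{r}\binom{n-r}{l}
=c_{l},
\]
the last sum being $1$ because it counts the $l$-subsets of $[n]$ containing $T_{0}$. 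Hence $\det B_{T}$ depends only on $|T|$, i.e.\ $B\in\PB$, which gives $\act{\DD}\subseteq\PB$ and completes $\act{\DD}=\PB$. To finish with $\act{\PB}\subseteq\DD$: for $k\ne l$ the matrix $E_{kl}$ is triangular with constant (zero) diagonal, hence principally balanced, so any $A\in\act{\PB}$ is in additive FFP with every such $E_{kl}$; Lemma~\ref{lem:Ekl} then forces every off-diagonal entry of $A$ to vanish, so $A\in\DD$. Combined with $\DD\subseteq\act{\PB}$, this gives $\act{\PB}=\DD$.

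I expect the main obstacle to be the converse half of the characterization — recovering that all principal minors of a given order coincide from the family of ``averaged'' identities above; the Möbius-inversion step is what makes this work, collapsing the whole system to a single nonzero binomial sum. The other point requiring care is the coefficient bookkeeping in the first step (tracking which square-free monomials in the $d_{j}$ arise from $\det(B_{S}+D_{S})$ and which from $e_{k-l}(d_{1},\dots,d_{n})$), but that is routine.
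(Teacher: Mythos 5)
Your proof is correct, and its overall skeleton (prove $\act{\DD}=\PB$ by two inclusions, then get $\act{\PB}\subseteq\DD$ from Lemma~\ref{lem:Ekl} applied to $E_{kl}\in\PB$) matches the paper's. The forward inclusion and the final step are essentially the same computation as in the paper. Where you genuinely diverge is the key converse inclusion $\act{\DD}\subseteq\PB$. The paper specializes to the one-parameter diagonal family $D(\lambda,K)$ supported on a set $K$ with $|K|=k$, evaluates both sides of the FFP identity at $x=0$, and lets $\lambda\to\infty$ to extract $\det(B_{[n]-K})=\binom{n}{k}^{-1}\sum_{|I|=n-k}\det(B_I)$ directly --- one clean equation per principal minor, with no inversion needed, because $K$ is chosen to be the complement of the minor being isolated. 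You instead treat the FFP condition as a polynomial identity in $d_1,\dots,d_n$ (valid since $\CC$ is infinite), match coefficients of each square-free monomial $d_U$ to obtain the full system $\sum_{|T|=l,\;T\cap U=\emptyset}\det B_T=\binom{n-|U|}{l}\binom{n}{l}^{-1}\sigma_l(B)$, and then recover the individual minors by inclusion--exclusion over $R\subseteq T_0$, using that $\sum_{r=0}^{l}(-1)^r\binom{l}{r}\binom{n-r}{l}=1$. Both arguments are sound; yours has the aesthetic advantage that a single coefficient computation delivers sufficiency and necessity simultaneously (sufficiency is the case where all $\det B_T$ with $|T|=l$ coincide, necessity is the M\"obius inversion of the same system), while the paper's limiting trick is shorter for the converse because it bypasses the inversion entirely.
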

	\begin{proof}
		The theorem clearly holds for $n=1$, so let us assume that $n \geq 2$.
		First, we show that for arbitrary $D \in \DD$ and $B \in \PB$, the equality $\chi_{D+B}(x) = \chi_{D}(x) \addconv \chi_{B}(x)$ holds.
		Let $D = \mathrm{diag} (d_1,\ldots,d_n)$, hence
		\begin{align}
			\label{eq:chi_D}
			\chi_D (x) &= \sum_{i=0}^{n} c_i x^{n-i} =\prod_{i=1}^n (x-d_i) \nonumber \\
			&= \sum_{i=0}^{n} \bigg( (-1)^i \sum_{J \subseteq [n],\, |J|=i} \det(D_J) \bigg) x^{n-i} \nonumber \\
			&= \sum_{i=0}^{n} \bigg( (-1)^i \sum_{J \subseteq [n],\, |J|=i} d_J \bigg) x^{n-i},
		\end{align}
		where $d_J = d_{j_1} \cdots d_{j_i}$ for $J=\{j_1, \ldots, j_i\}$.
		
		For $B \in \PB$, we have
		\begin{equation}
			\label{eq:chi_B}
			\chi_B (x) = \sum_{i=0}^{n} b_i x^{n-i} = \sum_{i=0}^{n} (-1)^i {n \choose i} m_i x^{n-i},
		\end{equation}
		where $m_i$, $i=0,\ldots,n$, is the value of a principal minor of $B$ of order $i$.
		
		The characteristic polynomial of $B+D$ is
		\begin{align}
			\label{eq:chi_D+B_1}
			\chi_{D+B}(x)
			&= \sum_{i=0}^{n} \bigg( (-1)^i \sum_{I \subseteq [n],\, |I|=i} \det((D+B)_I) \bigg) x^{n-i} \nonumber \\
			&= \sum_{i=0}^{n} \bigg( (-1)^i \sum_{I \subseteq [n],\, |I|=i} \sum_{j=0}^{i} \sum_{J \subseteq I,\, |J|=j}m_{i-j}d_J \bigg) x^{n-i}.
		\end{align}
		We observe that when we compute $\det((D+B)_I)$, with $|I|=i$, in \eqref{eq:chi_D+B_1}, then we sum over the products of $d_J$, which is a principal minor of order $j \leq i$ of $D$, and a principal minor of order $i-j$ of $B$.
		But all principal minors of order $i-j$ of $B$ equal $m_{i-j}$.
		It follows that for each $J$, the term $m_{i-j}d_J$ appears $n-j \choose i-j$ times in \eqref{eq:chi_D+B_1}, the number of principal minors of order $i-j$ of $B$ with indices of the submatrices outside of $J$, while the submatrix $D_J$ of $D$ is kept fixed.
		Hence
		\begin{align}
			\label{eq:chi_D+B_2}
			\chi_{D+B}(x)
			&= \sum_{i=0}^{n} \bigg( (-1)^i \sum_{j=0}^{i} \sum_{J \subseteq [n],\, |J|=j} {n-j \choose i-j} m_{i-j}d_J \bigg) x^{n-i} \nonumber \\
			&= \sum_{i=0}^{n} \bigg( (-1)^i \sum_{j=0}^{i}  {n-j \choose i-j} m_{i-j} \sum_{J \subseteq [n],\, |J|=j}d_J \bigg) x^{n-i}.
		\end{align}
		
		Next, we compute the additive convolution of $\chi_{D}(x)$ and $\chi_{B}(x)$.
		By \eqref{eq:addconv},
		\begin{align}
			\chi_{D}(x) \addconv \chi_{B}(x) &= \sum_{i=0}^{n} f_i x^{n-i}
			= \sum_{i=0}^{n} c_i x^{n-i} \addconv \sum_{i=0}^{n} b_i x^{n-i}\\
			&= \sum_{i=0}^{n} \bigg( \sum_{j=0}^{i} \frac{(n-j)!(n-i+j)!}{n!(n-i)!} c_j b_{i-j} \bigg) x^{n-i}.
		\end{align}
		Substituting for $c_j$ the appropriate coefficient from \eqref{eq:chi_D} and for $b_{i-j}$ the appropriate coefficient from \eqref{eq:chi_B}, we get
		\begin{align}
			\label{eq:chiD_conv_chiB}
			f_i &= \sum_{j=0}^{i} \frac{(n-j)!(n-i+j)!}{n!(n-i)!} (-1)^j \sum_{J \subseteq [n],\, |J|=j} d_J (-1)^{i-j} {n \choose i-j} m_{i-j} \nonumber \\
			&= (-1)^i \sum_{j=0}^{i} \frac{(n-j)!(n-i+j)!n!}{n!(n-i)!(n-i+j)!(i-j)!} m_{i-j} \sum_{J \subseteq [n],\, |J|=j} d_J \nonumber \\
			&= (-1)^i \sum_{j=0}^{i}  {n-j \choose i-j} m_{i-j} \sum_{J \subseteq [n],\, |J|=j}d_J.
		\end{align}
		We got in \eqref{eq:chiD_conv_chiB} the same coefficient of $x^{n-i}$ as in  \eqref{eq:chi_D+B_2}, hence $\chi_{D+B}(x) = \chi_{D}(x) \addconv \chi_{B}(x)$.
		It follows that $\PB \subseteq \act{\mc{D}}_n$ and $\DD \subseteq \act{\mc{B}}_n$.
		
		We show now that $\act{\mc{D}}_n \subseteq \PB$.
		Let $B=(b_{ij}) \in \act{\mc{D}}_n$.
		Let $K \subseteq [n]$, $|K|=k$, $0 < k < n$, and let $D(\lambda, K)=(d_{ij})$ be the parametric diagonal matrix with entries
		$$
		d_{ij} = \left\{ 
		\begin{array}{rl}
			\lambda \quad &\mbox{for } i=j \in K, \\
			0 \quad &\mbox{otherwise.}
		\end{array}
		\right.
		$$
		Evaluating $\chi_{D(\lambda,K)+B}(x)$ at $x=0$ gives
		$$
		\chi_{D(\lambda,K)+B}(0) = (-1)^n \det (D(\lambda,K)+B) = (-1)^n \det(B_{[n]-K})\lambda^{k} + q(\lambda), 
		$$
		where $q(\lambda)$ is a polynomial in $\lambda$ of degree less than $k$.
		It follows that
		\begin{equation}
			\label{eq:limsum}
			\lim_{\lambda \to \infty} \frac{\chi_{D(\lambda,K)+B}(0)}{\lambda^{k}} = (-1)^n \det(B_{[n]-K}).
		\end{equation}
		
		Let us now compute this limit for $\chi_{D(\lambda,K)}(x) \addconv \chi_{B}(x)$.
		The characteristic polynomial of $D(\lambda,K)$ is
		$$
		\chi_{D(\lambda,K)}(x)  = \sum_{i=0}^{n} a_i x^{n-i}
		= x^{n-k} (x-\lambda)^{k} = \sum_{i=0}^{n} {k \choose i} (-\lambda)^i x^{n-i},
		$$
		where ${k \choose i}=0$ when $i>k$.
		That is,
		$$
		a_i = \left\{ 
		\begin{array}{ccl}
			(-1)^i {k \choose i}  \lambda^i \quad &\mbox{for}& 0 \leq i \leq k, \\
			0 \quad &\mbox{for}& i > k.
		\end{array}
		\right.
		$$
		The characteristic polynomial of $B$ is
		$$
		\chi_B (x) = \sum_{i=0}^{n} b_i x^{n-i} = \sum_{i=0}^{n} \bigg( (-1)^i \sum_{I \subseteq [n],\, |I|=i} \det(B_I) \bigg) x^{n-i}.
		$$
		That is, $(-1)^i b_i$ is the sum of the ${n \choose i}$ principal minors of order $i$.
		
		By \eqref{eq:addconv},
		\begin{align*}
			(\chi_{D(\lambda,K)} \addconv \chi_{B})(0)
			&= \sum_{i=0}^n \frac{(n-i)!i!}{n!} a_i b_{n-i} \\ 
			&=  \sum_{i=0}^n \bigg( \frac{(-1)^i {k \choose i} (-1)^{n-i}}{{n \choose i}} \sum_{I \subseteq [n],\, |I|=n-i} \det(B_I) \bigg) \lambda^i,
		\end{align*}
		a polynomial of degree at most $k$ in $\lambda$.
		It follows that
		$$
		(\chi_{D(\lambda,K)} \addconv \chi_{B})(0) =
		\bigg( \frac{(-1)^n}{{n \choose k}} \sum_{I \subseteq [n],\, |I|=n-k} \det(B_I) \bigg) \lambda^k + p(\lambda),
		$$
		where $p(\lambda)$ is a polynomial in $\lambda$ of degree less than $k$.
		Then
		\begin{equation}
			\label{eq:limaconv}
			\lim_{\lambda \to \infty} \frac{(\chi_{D(\lambda,K)} \addconv \chi_{B})(0)}{\lambda^{k}} = \frac{(-1)^n}{{n \choose k}} \sum_{I \subseteq [n],\, |I|=n-k}\det(B_I).
		\end{equation}
		Comparing \eqref{eq:limsum} with \eqref{eq:limaconv}, we get that
		$$
		\det(B_{[n]-K}) = \frac{1}{{n \choose k}} \sum_{I \subseteq [n],\, |I|=n-k}\det(B_I),
		$$
		that is, for every $1 \le k \le n$, every principal minor of $B$ of order $n-k$ equals the mean value of the principal minors of order $n-k$. In other words, all principal minors of the same order are equal.
		It follows that $B$ is principally balanced and $\act{\mc{D}}_n \subseteq \PB$.
		Together with the inclusion $\PB \subseteq \act{\mc{D}}_n$ we have
		$$
		\act{\mc{D}}_n = \PB.
		$$
		
		It remains to show that the additive free complement of $\PB$ is $\DD$ and not a larger family.
		Suppose that $A$ has an off-diagonal entry $a_{kl}\ne0$, $k\ne
                l$
                Then the matrix $E_{kl}$ from Lemma~\ref{lem:Ekl} 
                satisfies that $\chi_{A+E_{kl}}(x) \neq \chi_{A}(x) \addconv
                \chi_{E_{kl}}(x)$.
                On the other hand all principal minors of $E_{kl}$ vanish and hence $E_{kl} \in \PB$.
		We showed that $A \notin \act{\mc{B}}_n$ and consequently $\act{\mc{B}}_n \subseteq \DD$.
		Together with the inclusion in the other direction, we have
		$$
		\act{\mc{B}}_n = \DD
		$$
		and the proof is complete.
	\end{proof}
	
	\begin{corollary}
		\label{cor:commuting}
		Let $\Sn{A} \subseteq \Cnn$ be a maximal family of commuting diagonalizable matrices and let $\PB \subseteq \Cnn$ be the family of principally balanced matrices.
 Then there exists a non-singular matrix $P$, such that $\Sn{A}$ and $P^{-1} {\PB} P$ form an additive complementary pair.
\end{corollary}
\begin{proof}
  Since the matrices in $\Sn{A}$ commute with each other, they are simultaneously diagonalizable (\cite{HJ13}, Theorem~1.3.21).
  That is, there exists a non-singular matrix $P$, such that for
  each $A \in \Sn{A}$ the conjugated matrix $D = P A P^{-1}$ is
  diagonal. 
  By the maximality of $\Sn{A}$ and the non-singularity of $P$, $\Sn{A} = P^{-1} \DD P$ is a subspace of dimension $n$.
  By Theorem~\ref{thm:addcomp_1}, for every $A \in \Sn{A}$ and $B \in \PB$, 
  $$
  \chi_{PAP^{-1}+B}(x) = \chi_{PAP^{-1}}(x) \addconv \chi_{B}(x)
  $$
  and the subsets $P \Sn{A} P^{-1}$ and $\PB$
  form an additive complementary pair.
  By similarity transformation, the same holds for $\Sn{A}$ and $P^{-1} {\PB} P$.
\end{proof}
	
	\subsection{Scalar matrices}
	Another additive complementary pair is the following.
	\begin{theorem}
		\label{thm:addcomp_2}
		Let $\Sn{S} \subseteq \Cnn$ be the family of $n \times n$ scalar matrices.
		Then $\Sn{S}$ and $\Cnn$ form an additive complementary pair.
              \end{theorem}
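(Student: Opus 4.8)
The plan is to verify the two set equalities $\act{\Sn{S}} = \Cnn$ and $\act{\Cnn} = \Sn{S}$; together these are exactly the statement that $(\Sn{S},\Cnn)$ is an additive complementary pair. Almost all of the work is already done by Proposition~\ref{pr:closedness}, Lemma~\ref{lem:Ekl}, and Theorem~\ref{thm:addcomp_1}, so the proof will mostly be an assembly of these.

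For the inclusion $\Cnn \subseteq \act{\Sn{S}}$ I would start from the zero matrix, which is scalar and is in additive FFP with every $B \in \Cnn$: indeed $\chi_0(x) = x^n$ is the neutral element for $\addconv$ by Example~\ref{ex:unit}, so $\chi_{0+B}(x) = \chi_B(x) = \chi_0(x) \addconv \chi_B(x)$. Applying Proposition~\ref{pr:closedness} with $A = 0$ then promotes this to: $\lambda I = 0 + \lambda I$ is in additive FFP with every $B \in \Cnn$, for every $\lambda \in \CC$. Hence every matrix of $\Cnn$ is in additive FFP with every scalar matrix, i.e.\ $\Cnn \subseteq \act{\Sn{S}}$; since $\act{\Sn{S}} \subseteq \Cnn$ by definition, this yields $\act{\Sn{S}} = \Cnn$.

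For $\act{\Cnn} = \Sn{S}$, the inclusion $\Sn{S} \subseteq \act{\Cnn}$ comes for free: being in additive FFP is symmetric on pairs, since $A+B=B+A$ and $\addconv$ is commutative (evident from \eqref{eq:addconv}), so ``$\Cnn \subseteq \act{\Sn{S}}$'' rewrites as ``$\Sn{S} \subseteq \act{\Cnn}$''. The substance is the reverse inclusion $\act{\Cnn} \subseteq \Sn{S}$. Take $B = (b_{ij}) \in \act{\Cnn}$. First, for any $k \ne l$ we have $E_{kl} \in \Cnn$, so Lemma~\ref{lem:Ekl} applied with $A = B$ forbids a nonzero off-diagonal entry: if $b_{lk} \ne 0$ for some $l \ne k$ then $\chi_{B+E_{kl}}(x) \ne \chi_B(x) \addconv \chi_{E_{kl}}(x)$, contradicting $B \in \act{\Cnn}$. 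Hence $B$ is diagonal. Second, since $\DD \subseteq \Cnn$ we get $B \in \act{\DD}$, which equals $\PB$ by Theorem~\ref{thm:addcomp_1}; thus $B$ is principally balanced, so its diagonal entries --- the principal minors of order $1$ --- all coincide. A diagonal matrix with equal diagonal entries is scalar, so $B \in \Sn{S}$, and $\act{\Cnn} = \Sn{S}$ follows.

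I do not expect a genuine obstacle here; the one point that needs care is that neither ingredient alone gives $\act{\Cnn} \subseteq \Sn{S}$. Lemma~\ref{lem:Ekl} only forces $B$ diagonal but does not by itself exclude a non-scalar diagonal matrix, and Theorem~\ref{thm:addcomp_1} only forces $B \in \PB$, which contains non-scalar matrices; one genuinely needs to intersect the two conclusions, using $\DD \cap \PB = \Sn{S}$.
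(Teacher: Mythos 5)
Your proposal is correct and follows essentially the same route as the paper: the inclusion $\Sn{S}\subseteq\act{\Cnn}$ (equivalently $\Cnn\subseteq\act{\Sn{S}}$) via the zero matrix, Example~\ref{ex:unit} and Proposition~\ref{pr:closedness}, and the reverse inclusion by reducing to Theorem~\ref{thm:addcomp_1} and the identity $\DD\cap\PB=\Sn{S}$. The only cosmetic difference is that where you invoke Lemma~\ref{lem:Ekl} directly to force $B$ diagonal, the paper simply cites $\act{\Cnn}\subseteq\act{\PB}=\DD$ from Theorem~\ref{thm:addcomp_1} --- which is the same argument, since Lemma~\ref{lem:Ekl} is exactly what proves $\act{\PB}\subseteq\DD$ there.
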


	\begin{proof}

        Let $A\in\Cnn$ be an arbitrary matrix.
        Then the zero matrix is in finite free position with $A$ because $\chi_0(x)=x^n$ is the neutral element for additive finite free convolution (Example~\ref{ex:unit}).
        It follows from Proposition~\ref{pr:closedness} that any scalar matrix is in finite free position with $A$ as well.
        Hence
		$$
		\Sn{S} \subseteq \act{\mc{M}}_n.
		$$
        For the converse, by Theorem~\ref{thm:addcomp_1}, since $\Cnn \supseteq \DD$ and $\Cnn \supseteq \PB$,
		$$
		\act{\mc{M}}_n \subseteq \act{\mc{D}}_n \cap \act{\mc{B}}_n = \PB \cap \DD = \Sn{S}.
		$$
		By both inclusions, we conclude that 
		$$
		\act{\mc{M}}_n = \Sn{S}.
		$$
		Clearly, since $\act{\mc{S}}_n = \biact{\mc{M}}_n \supseteq \Cnn$, we get that
		$$
		\act{\mc{S}}_n = \Cnn.
		$$
		It follows that $\Sn{S}$ and $\Cnn$
		form an additive complementary pair.
	\end{proof}

	\subsection{Triangular matrices}
	In this subsection we exhibit an additive complementary pair within the ring of upper triangular matrices.
	\begin{theorem}
          \label{thm:addcomptriangular}
          Let $\UT \subseteq \Cnn$ be the family of $n \times n$ upper
          triangular matrices and let $\PBUT \subseteq \Cnn$ be the family of
          $n \times n$ upper triangular matrices with constant diagonal, i.e.,
          matrices which can be written as the sum of a nilpotent and a scalar matrix.
          Then $\UT$ and $\PBUT$ form an additive complementary pair.

          Analogously, the lower triangular matrices $\LT$ and $\PBLT$ form an additive complementary pair.
    \end{theorem}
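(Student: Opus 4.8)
The plan is to establish the four inclusions $\PBUT\subseteq\act{\UT}$, $\UT\subseteq\act{\PBUT}$, $\act{\UT}\subseteq\PBUT$ and $\act{\PBUT}\subseteq\UT$, which together give $\act{\UT}=\PBUT$ and $\act{\PBUT}=\UT$, i.e.\ the claimed complementary pair. The key observation I would use throughout is that for two \emph{upper triangular} matrices $A,B$ the property of being in additive FFP depends only on their diagonals: both $\chi_A(x)=\prod_k(x-a_{kk})$ and $\chi_B(x)=\prod_k(x-b_{kk})$ are determined by the diagonal entries, and since $A+B$ is again upper triangular, so is $\chi_{A+B}(x)=\prod_k(x-a_{kk}-b_{kk})$; by Remark~\ref{rem:additiveFFP} the convolution $\chi_A\addconv\chi_B$ depends only on $\chi_A$ and $\chi_B$. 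Hence the problem for triangular matrices reduces to the corresponding problem for the diagonal matrices $\mathrm{diag}(a_{11},\dots,a_{nn})$ and $\mathrm{diag}(b_{11},\dots,b_{nn})$, where Theorem~\ref{thm:addcomp_1} is available. The lower triangular statement then follows by transposition, using that $\chi_{A^T}=\chi_A$ and $(A+B)^T=A^T+B^T$ imply that $A,B$ are in additive FFP iff $A^T,B^T$ are, while transposition maps $\UT$ to $\LT$ and $\PBUT$ to $\PBLT$.

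For the two easy inclusions I would take an arbitrary $A\in\UT$ and $B\in\PBUT$ with constant diagonal value $c$, write $B=cI+N$ with $N$ strictly upper triangular (hence nilpotent, so $\chi_N(x)=x^n$), and note that $A+N$ is upper triangular with the same diagonal as $A$, whence $\chi_{A+N}(x)=\chi_A(x)=\chi_A(x)\addconv x^n=\chi_A(x)\addconv\chi_N(x)$ by Example~\ref{ex:unit}; thus $A$ and $N$ are in additive FFP, and then so are $A$ and $B=N+cI$ by Proposition~\ref{pr:closedness} (together with symmetry of the FFP relation). Since $A$ and $B$ were arbitrary, this yields $\PBUT\subseteq\act{\UT}$ and $\UT\subseteq\act{\PBUT}$.

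For $\act{\PBUT}\subseteq\UT$ I would invoke Lemma~\ref{lem:Ekl}: if $A$ has a nonzero entry $a_{lk}$ with $l>k$, then $E_{kl}$ is strictly upper triangular and hence lies in $\PBUT$, and the lemma shows $A$ is not in additive FFP with $E_{kl}$, so $A\notin\act{\PBUT}$; thus every element of $\act{\PBUT}$ is upper triangular. For $\act{\UT}\subseteq\PBUT$ the same argument, now using $E_{kl}\in\UT$ for $k<l$, shows that any $B\in\act{\UT}$ is upper triangular; to see its diagonal is constant, I would feed in the diagonal test matrices $D=\mathrm{diag}(a_1,\dots,a_n)\in\DD\subseteq\UT$: by the reduction above, $B$ is in additive FFP with $D$ iff $\mathrm{diag}(B):=\mathrm{diag}(b_{11},\dots,b_{nn})$ is, so $\mathrm{diag}(B)\in\act{\DD}=\PB$ by Theorem~\ref{thm:addcomp_1}; but a diagonal principally balanced matrix is scalar (its order-$1$ principal minors, namely its diagonal entries, must all coincide), so $\mathrm{diag}(B)\in\DD\cap\PB=\Sn{S}$ and $B$ has constant diagonal, i.e.\ $B\in\PBUT$. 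Combining the four inclusions gives $\act{\UT}=\PBUT$ and $\act{\PBUT}=\UT$, and the transposed version follows as indicated.

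The only step I expect to require care is the reduction itself — verifying cleanly that for upper triangular matrices ``being in additive FFP'' is a property of the diagonals alone, and in particular that the diagonal probes $D\in\DD\subseteq\UT$ detect exactly the principal-balance condition on $\mathrm{diag}(B)$. Once that is set up, the rest is a direct application of Lemma~\ref{lem:Ekl}, Example~\ref{ex:unit}, Proposition~\ref{pr:closedness} and Theorem~\ref{thm:addcomp_1}, with no new computation needed, in contrast to the proof of Theorem~\ref{thm:addcomp_1} itself.
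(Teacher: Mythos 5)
Your proof is correct and follows essentially the same route as the paper: reduce the triangular case to its diagonal part, use Lemma~\ref{lem:Ekl} with strictly upper triangular $E_{kl}$ for the reverse inclusions, and invoke Theorem~\ref{thm:addcomp_1} to force principal balance, hence a constant diagonal. The only cosmetic difference is that for the easy inclusions you decompose $B=cI+N$ and use Example~\ref{ex:unit} together with Proposition~\ref{pr:closedness}, whereas the paper passes directly to the diagonal parts (a diagonal matrix versus a scalar one) and cites Theorem~\ref{thm:addcomp_1}; both are valid.
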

	\begin{proof}

		Let $T \in \UT$ and $C \in \PBUT$. Then $\chi_T (x), \chi_C (x)$ and $\chi_{T+C} (x)$ do not change if we replace them by their diagonals, i.e., if we set all off-diagonal entries to zero.
    	Then $T$ becomes diagonal and $C$  becomes a scalar matrix and thus principally balanced.
        By Theorem~\ref{thm:addcomp_1}, $\chi_{T+C}(x) = \chi_{T}(x) \addconv \chi_{C}(x)$.
        This shows that $\PBUT \subseteq \act{\mc{R}}_n$ and $\UT \subseteq \act{\PBUT}$.
		
		Next, we show that $\act{\PBUT} \subseteq \UT$.
        Suppose that $A=(a_{ij}) \notin \UT$, i.e., $a_{lk} \neq 0$ for some $(l,k)$ with $l > k$.
		Then the matrix $E_{kl}$ from Lemma~\ref{lem:Ekl} is in $\PBUT$ but $\chi_{A+E_{kl}}(x) \neq \chi_{A}(x) \addconv \chi_{E_{kl}}(x)$.
		It follows that $A \notin \act{\PBUT}$ and consequently that $\act{\PBUT} \subseteq \UT$.
		
		It remains to show that $\act{\mc{R}}_n \subseteq \PBUT$.
		Let $C \in \act{\mc{R}}_n$, then the argument of the previous paragraph shows that $C$ is upper triangular.
        Since $\DD\subseteq \UT$, it follows from Theorem~\ref{thm:addcomp_1} that $C$ is principally balanced 
        and thus $C\in\UT\cap\PB=\PBUT$.
	\end{proof}
	
	\begin{corollary}
		\label{cor:single_ev}
		If $A \in \Cnn$ is the sum of a nilpotent and a scalar matrix
		and $B \in \Cnn$ commutes with $A$ then, for any polynomials $p(x)$ and $q(x)$, $p(A)$ and $q(B)$ are in additive FFP.
	\end{corollary}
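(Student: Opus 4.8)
The plan is to conjugate the pair $(p(A),q(B))$ into the ring of upper triangular matrices and then invoke Theorem~\ref{thm:addcomptriangular}. The starting point is the classical fact that two commuting matrices over the algebraically closed field $\CC$ have a common eigenvector, and hence, by induction on the dimension, can be simultaneously brought to upper triangular form (see e.g. \cite[Theorem~2.3.3]{HJ13}): there is an invertible matrix $P$ such that $T_A := P^{-1}AP$ and $T_B := P^{-1}BP$ are both upper triangular.

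Next I would use the hypothesis that $A$ has a single eigenvalue $\lambda$. Since the diagonal entries of the triangular form $T_A$ are precisely the eigenvalues of $A$, we have $T_A = \lambda I + N$ with $N$ strictly upper triangular, i.e.\ $T_A \in \PBUT$. For any polynomial $p$, expanding the powers $(\lambda I + N)^k$ shows that $P^{-1}p(A)P = p(T_A) = p(\lambda I + N)$ is again upper triangular with every diagonal entry equal to $p(\lambda)$, so $P^{-1}p(A)P \in \PBUT$. For any polynomial $q$, $P^{-1}q(B)P = q(T_B)$ is upper triangular, hence lies in $\UT$.

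Finally, by Theorem~\ref{thm:addcomptriangular} the families $\UT$ and $\PBUT$ form an additive complementary pair, so in particular every matrix of $\PBUT$ is in additive FFP with every matrix of $\UT$; thus $P^{-1}p(A)P$ and $P^{-1}q(B)P$ are in additive FFP. By conjugation-invariance (Remark~\ref{rem:additiveFFP}\,\eqref{rem:additiveFFP:conj}) it follows that $p(A)$ and $q(B)$ are in additive FFP as well. There is no serious obstacle here; the only points requiring care are the simultaneous triangularizability of commuting complex matrices and the elementary observation that polynomial functional calculus commutes with conjugation by $P$ and preserves both the upper triangular shape and the constant-diagonal property.
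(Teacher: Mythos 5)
Your proof is correct and takes essentially the same route as the paper's own argument: simultaneous upper triangularization of the commuting pair via \cite[Theorem~2.3.3]{HJ13}, the observation that $p(A)$ is conjugated into an upper triangular matrix with constant diagonal while $q(B)$ is conjugated into an upper triangular matrix, and an appeal to Theorem~\ref{thm:addcomptriangular} together with invariance of additive FFP under simultaneous conjugation. The paper states this more tersely, but the underlying argument is identical.
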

	\begin{proof}
		Since $A$ and $B$ commute, they are simultaneously triangularizable \cite[Theorem~2.3.3]{HJ13}, that is, for some unitary matrix $U$, $U^{*}AU$ and $U^{*}BU$ are upper triangular, with $U^{*}AU$ having a constant diagonal. The same holds for $p(A)$ and $q(B)$, so, by Theorem~\ref{thm:addcomptriangular}, $p(A)$ and $q(B)$ are in additive FFP.
	\end{proof}
	\begin{remark}
		Unlike diagonalization, commuting matrices are simultaneously triangularizable, but the converse is not true in general.
	\end{remark}
	
\subsection{Conclusion}
We conclude this section with the following observations.  Denote
$\mcb{V}=\mc{V}\cap\PB$ for any subset $\mc{V}\subseteq\Cnn$.
In particular, $\PB=\Snb{M}$.
\begin{enumerate}
 \item
  This notation is compatible with Theorem~\ref{thm:addcomptriangular}
  by the observation that a triangular matrix is
  principally balanced if and only if it has constant diagonal.
            
 \item
  In particular, scalar matrices are exactly the principally balanced
  diagonal matrices, $\Snb{D}=\PBDD$.
 \item
  The examples found so far can be summarized in the following ``recipe'':
  \begin{enumerate}[1.]
   \item
    Pick a pair $(\mc{V}_1,\mc{V}_2)$ among $(\DD,\Cnn)$, $(\UT,\UT)$ or
    $(\LT,\LT)$.
   \item
    Restrict one of the components to its principally balanced subset
    $\mcb{V}_i$.
\end{enumerate}
           \end{enumerate}

	\section{Moments and cumulants of sums of matrices in additive FFP}
	\label{sec:moments}
	In \cite{AP18} Arizmendi and Perales introduced \textbf{cumulants} for the  additive finite free convolution as the coefficients of a truncated $R$-transform, by showing that they satisfy the axiomatization of cumulants as defined in \cite{Lehner02}.
	Asymptotically, the finite free cumulants converge to free cumulants.
	Finite free cumulants are additive with respect to finite free convolution \cite{AP18},
	which implies that for matrices $A$ and $B$  in additive FFP we have
	\begin{equation}
		\label{eq:cumulants_additive}
		\kappa_i(A+B) = \kappa_i(A) + \kappa_i(B),
	\end{equation}
	where $\kappa_i(A)$ is the $i$-th order additive finite free cumulant of (the characteristic polynomial of) $A$. 
	A similar equality holds for independent as well as for free independent random variables. 
	
	Cumulants are tightly related to moments, with concrete moment--cumulant formulas, which allows passing from one representation to the other. Such formulas were introduced in \cite{AP18} in the finite free setting.
	This means that one can obtain formulas for the moments of $A+B$ in terms of the moments of $A$ and those of $B$ when $A$ and $B$ are in finite free position, as we show below.
	Conversely, when $A$ and $B$ satisfy these moment formulas, for $k=2, \ldots,n$, then they are in additive free position.
	
	The first \textbf{moment} (or normalized trace) of $A \in \Cnn$ is defined to be
	$$
	m_1(A) := \tr(A) = \frac{1}{n}\trace(A) = \frac{1}{n}\sum_{i=1}^{n} \lambda_i,
	$$
	the arithmetic mean of the eigenvalues $\lambda_i$ of $A$.
	In general, the $k$-th moment of $A$, $k \geq 1$, is
	$$
	m_k(A) := \tr(A^k) = \frac{1}{n} \sum_{i=1}^{n} \lambda_i^k.
	$$
	
	We make use of the following coefficient--moment formula of Lewin \cite{Lewin94} that is based on Newton's identities.
	Let $\chi_A (x) = \sum_{k=0}^{n} a_k x^{n-k}$. Then $a_0=1$ and, for $k=1,\ldots,n$,
	\begin{equation}
		\label{eq:Lewin}
		a_k = \sum_{\pi}\prod_{i=1}^{t}\frac{(-n \cdot m_{r_i}(A))^{s_i}}{r_i^{s_i} s_i !},
	\end{equation}
	where the summation is over all partitions $\pi$ of $k$ of the form
	$$
	k = \underbrace{r_1 + \cdots + r_1}_{s_1 \, \mathrm{ summands}} + \underbrace{r_2 + \cdots + r_2}_{s_2 \, \mathrm{ summands}} + \cdots + \underbrace{r_t + \cdots + r_t}_{s_t \, \mathrm{ summands}}
	=\sum_{i=1}^{t}s_i r_i,
	$$
	with $0 < r_1 < r_2 < \cdots < r_t$, and $r_i = r_i(\pi), s_i = s_i(\pi), t = t(\pi)$.
	
	\begin{theorem}
		\label{thm:momentsofsum}
		Let $A,B \in \Cnn$ be in additive FFP.
		Then the moments of $A+B$ can be expressed in terms of the moments of $A$ and the moments of $B$.	
	\end{theorem}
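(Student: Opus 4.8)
The plan is to route the computation through the coefficients of the characteristic polynomials, exploiting that in characteristic zero the passage between coefficients and power sums is a triangular, hence invertible, polynomial change of variables. Write $\chi_A(x)=\sum_{k=0}^n a_k x^{n-k}$, $\chi_B(x)=\sum_{k=0}^n b_k x^{n-k}$ and $\chi_{A+B}(x)=\sum_{k=0}^n e_k x^{n-k}$, with the normalization $a_0=b_0=e_0=1$.

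First I would use the hypothesis that $A$ and $B$ are in additive FFP, so that $\chi_{A+B}=\chi_A\addconv\chi_B$. By the defining formula \eqref{eq:addconv} of additive convolution, each coefficient is the explicit bilinear expression $e_k=\sum_{i+j=k}\frac{\binom{n-i}{j}}{\binom{n}{j}}\,a_i b_j$; in particular $e_k$ is a polynomial in $a_1,\dots,a_k,b_1,\dots,b_k$ (the terms with $j=0$ and $i=0$ contribute the summands $a_k$ and $b_k$, which is also why the two leading coefficients of $\chi_{A+B}$ and $\chi_A\addconv\chi_B$ agree unconditionally, as noted in Remark~\ref{rem:additiveFFP}).

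Next I would invert Lewin's formula \eqref{eq:Lewin}. Singling out the partition $\pi=(k)$ (i.e.\ $t=1$, $r_1=k$, $s_1=1$) and noting that every other partition of $k$ has all parts strictly smaller than $k$, one obtains
\[
a_k = -\frac{n}{k}\,m_k(A) + P_k\bigl(m_1(A),\dots,m_{k-1}(A)\bigr)
\]
for a universal polynomial $P_k$ (this is just Newton's identity). Since $-n/k\neq0$ in $\CC$, this system is triangular with invertible diagonal, so recursively each $m_k(A)$ is a polynomial in $a_1,\dots,a_k$; the same holds with $A$ replaced by $B$, and --- the point of the proof --- with $A$ replaced by $A+B$, giving $m_k(A+B)=F_k(e_1,\dots,e_k)$ for an explicit polynomial $F_k$ (depending only on $n$ and $k$).

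Composing the three steps yields, for $1\le k\le n$,
\[
m_k(A+B)=F_k(e_1,\dots,e_k)=G_k\bigl(m_1(A),\dots,m_k(A),m_1(B),\dots,m_k(B)\bigr)
\]
for an explicit polynomial $G_k$, which is exactly the assertion. The explicit formulas for the first four moments are then obtained by carrying out this substitution for $k=1,2,3,4$; for these low orders a cleaner bookkeeping uses the additivity of the finite free cumulants, $\kappa_i(A+B)=\kappa_i(A)+\kappa_i(B)$ from \eqref{eq:cumulants_additive}, together with the moment--cumulant formulas of \cite{AP18}, since $m_k$ is a polynomial in $\kappa_1,\dots,\kappa_k$ and conversely. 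I do not expect a genuine obstacle: the only point needing care is that the moment-to-coefficient map is a triangular polynomial automorphism of $\CC^n$ with nonzero diagonal entries $-n/k$, hence globally invertible over $\CC$; everything else is symmetric-function bookkeeping, and the explicit low-order formulas are a direct (if slightly tedious) substitution.
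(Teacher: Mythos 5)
Your proposal is correct and follows essentially the same route as the paper: both use the FFP hypothesis to write the coefficients of $\chi_{A+B}$ bilinearly in those of $\chi_A$ and $\chi_B$, and both use Newton's identities (Lewin's formula) to pass back and forth between coefficients and moments — the paper phrases this as an induction on the moment order, you phrase it as inverting a triangular polynomial change of variables, which is the same argument unrolled. The paper likewise notes the alternative cumulant-based bookkeeping via \cite{AP18} that you mention for the low-order formulas.
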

	\begin{proof}
		The proof is by induction. For $r=1$, we have
		$$m_1(A+B)=m_1(A)+m_1(B)$$ by linearity of the trace operator.
		Suppose that the statement holds for $1 \leq i < r$.
		Let $\chi_{A+B}(x) = \chi_{A}(x) \addconv \chi_{B}(x) = \sum_{i=0}^{n} c_i x^{n-i}$.
		By Newton's identity,
		$$
		-r c_r = \sum_{i=0}^{r-1} c_i \trace (A+B)^{r-i},
		$$
		thus,
		\begin{equation}
			\label{eq:Newton}
			m_r(A+B) = -\frac{r}{n} c_r - \sum_{i=1}^{r-1} c_i m_{r-i}(A+B).
		\end{equation}	 
		Since $A$ and $B$ are in additive FFP, we have, for $1 \leq i \leq r$,
		$$
		c_i = \sum_{l+j=i} \frac{{n-l \choose j}}{{n \choose j}} a_l b_j,
		$$
		and by \eqref{eq:Lewin}, the coefficients $a_l$ and $b_j$ can be written in terms of the moments of $A$, respectively $B$.
		It remains to express the moments $m_{r-i}(A+B)$ on the right hand side of \eqref{eq:Newton} through the moments of $A$ and $B$, which is assumed by the induction hypothesis.
		
		Equivalently, we can obtain a complex formula for $m_r(A+B)$ through the formulas in \cite{AP18}.
		By Equation~(4.5) in \cite{AP18}, $m_r(A+B)$ can be written in terms of the 
		cumulants $\kappa_1(A+B), \ldots, \kappa_r(A+B)$.
		Then, as shown in \cite{AP18}, Proposition~3.6, finite free cumulants are additive with respect to polynomial convolution, so that for each $i$, $\kappa_i(A+B) = \kappa_i(A) + \kappa_i(B)$.
		Finally, by Equation ~(4.4) in \cite{AP18}, each $\kappa_i(A)$ can be written as an expression in the moments $m_1(A), \ldots, m_i(A)$, and similarly for $\kappa_i(B)$.
	\end{proof}
	\begin{examples}
		We obtain simple formulas for the first, second and third moments of $A+B$ when $A,B \in \Cnn$ are in additive FFP. For the higher moments the formulas become more complex and, unlike the case $n \le 3$, they depend on the dimension of the matrices.
		Here are the first four moments.
		\begin{align*}
			m_1(A+B) &= m_1(A)+m_1(B);\\
			m_2(A+B) &= m_2(A)+2m_1(A)m_1(B)+m_2(B);\\
			m_3(A+B) &= m_3(A)+3m_2(A)m_1(B)+3m_1(A)m_2(B)+m_3(B);\\
			m_4(A+B) &= m_4(A) + 4m_3(A)m_1(B) + \ltfrac{2n}{n-1}m_2(A)m_1^2(B) \\ 
			&\quad + \ltfrac{4n-6}{n-1}m_2(A)m_2(B) - \ltfrac{2n}{n-1}m_1^2(A)m_1^2(B) \\
			&\quad + \ltfrac{2n}{n-1}m_1^2(A)m_2(B) + 4m_1(A)m_3(B) + m_4(B).
		\end{align*}
		
		We demonstrate the above formula for the second moment.
		\begin{align*}
			&m_2(A+B) = -\ltfrac{2}{n}c_2 - c_1m_1(A+B)\\
			&= -\ltfrac{2}{n}\Big(a_0b_2 +\ltfrac{n-1}{n}a_1b_1+ a_2b_0\Big) - \Big((a_0b_1 + a_1b_0)m_1(A+B)\Big) \\
			&= -\ltfrac{2}{n}\Big((\ltfrac{n^2}{2}m_1^2(B)-\ltfrac{n}{2}m_2(B)) + \ltfrac{n-1}{n}n^2m_1(A)m_1(B)\Big. \\
			&\quad + \Big.(\ltfrac{n^2}{2}m_1^2(A)-\ltfrac{n}{2}m_2(A))\Big) \\
			&\quad + \Big(n(m_1(A)+m_1(B))(m_1(A)+m_1(B))\Big) \\	
			&= -n m_1^2(B) + m_2(B) - 2(n-1)m_1(A)m_1(B) -n m_1^2(A) + m_2(A) \\
			&\quad + nm_1^2(B) + 2nm_1(A)m_1(B) + nm_1^2(A) \\
			&= m_2(A) + 2m_1(A)m_1(B) + m_2(B).
		\end{align*}
		
		By the linearity of the moment and the fact that $\trace(AB) = \trace(BA)$, it follows from the formulas for the second and third moments that when $A$ and $B$ are in FFP then
		\begin{align*}
			&m_1(AB) = m_1(A)m_1(B); \\
			&m_1(A B^2) + m_1(A^2 B) = m_1(A)m_2(B) + m_2(A)m_1(B).
		\end{align*}
	\end{examples}
	
	Let us now examine the case when $A$ is in additive FFP with itself.
	We denote by $\mc{P}(j)$ the set of partitions of the set $[j] = \{1,\ldots,j\}$.
	$\mc{P}(j)$ forms a lattice with a least element $0_j = \{\{1\},\{2\},\ldots,\{j\}\}$ and an upper element $1_j = \{\{1,2,\ldots,j\}\}$.
	When $\pi = \{V_1,\ldots,V_r\} \in \mc{P}(j)$ is a partition of $[j]$ with $r$ blocks then we set $|\pi|=r$ and denote by $\kappa_{\pi}(A)$ the product of the cumulants
	$$\kappa_{\pi}(A) := \kappa_{|V_1|}(A)\kappa_{|V_2|}(A)\cdots \kappa_{|V_r|}(A).$$
	The M\"{o}bius function $\mu(0_j,\pi)$ is
	$$\mu(0_j,\pi) = (-1)^{j-|\pi|} (2!)^{r_3} (3!)^{r_4} \cdots ((j-1)!)^{r_j},$$ where $r_i$ is the number of blocks of $\pi$ of size $i$. 
	\begin{theorem}
		\label{thm:selfaddfree}
		Let $A \in \Cnn$. Then $A$ is in additive FFP with itself if and only if 
		it is the sum of a nilpotent and a scalar matrix,
                i.e., the multiple of a unipotent matrix.
	\end{theorem}
	\begin{proof}
		When $A$ is the sum of a nilpotent and a scalar matrix
		then it is in additive FFP with itself by Corollary~\ref{cor:single_ev}.
		
		Suppose now that $A$ is in additive FFP with itself.
		By \eqref{eq:cumulants_additive}, $\kappa_j(2A) = 2\kappa_j(A)$, for $j \geq 1$.
		On the other hand, by the definition of a cumulant \cite{AP18}, $\kappa_j(2A) = 2^j \kappa_j(A)$. It follows that
		\begin{equation*}
			\kappa_j(A) = 0, \quad \textrm{ for } j \geq 2.
		\end{equation*}
		It is shown in \cite{AP18} that for $j \geq 1$, 
		\begin{equation}
			\label{eq:coeff_cumutant}
			m_j(A) =  \frac{(-1)^{j-1}}{n^{j+1}(j-1)!}\sum_{\pi \in \mc{P}(j)}
			n^{|\pi|}\mu(0_j,\pi)\kappa_{\pi}(A) \sum_{\rho : \rho \vee \pi = 1_j} n^{|\rho|}\mu(0_j,\rho).
		\end{equation}
		Since $\kappa_j(A) = 0$, for $j \geq 2$, then in the first summation in \eqref{eq:coeff_cumutant}, all summands vanish, except for the bottom partition $\pi = 0_j$, and then the second summation is over the set containing just the top partition $\rho=1_j$, resulting in
		\begin{align}
			\label{eq:coeff_cumulant_free}
			m_j(A) &= \frac{(-1)^{j-1}}{n^{j+1}(j-1)!}n^{|0_j|}\mu(0_j,0_j)\kappa_{0_j}(A) 
			n^{|1_j|}\mu(0_j,1_j) \nonumber \\
			&=\frac{(-1)^{j-1}}{n^{j+1}(j-1)!}n^j\kappa_1^j(A) n^1(-1)^{j-1}(j-1)! \\
			&=m_1^j(A). \nonumber
		\end{align}
		The last equation follows from $\kappa_1(A) = m_1(A) = \tr(A)$.
		
		The equations \eqref{eq:coeff_cumulant_free}, for $j=2,\ldots,n$, imply that the eigenvalues of $A$ equal each other, that is, $A$ is the sum of a nilpotent and a scalar matrix.
	\end{proof}
	
	\section{Matrices in multiplicative finite free position}
	\label{sec:multconv}
	In addition to additive convolution of polynomials, the notion of multiplicative convolution was introduced in \cite{MSS22}). We examine here matrices that are in multiplicative free position and obtain results that are similar to those in the additive case.
	\begin{definition}
		\label{def:multconv}
		Let $p(x) = \sum_{i=0}^{n} a_i x^{n-i}$, $q(x) = \sum_{i=0}^{n} b_i x^{n-i}$ be two polynomials of degree $n$ over $\CC$.
		The \textbf{multiplicative convolution} of $p(x)$ and $q(x)$, denoted $p(x) \multconv q(x)$, is
		\begin{equation}
			\label{eq:multconv}
			p(x) \multconv q(x)
			:= \sum_{k=0}^{n} \frac{(-1)^k }{{n \choose k}} a_k b_k x^{n-k}.
		\end{equation}
	\end{definition}
	
	
	The following theorem is the analogue of Theorem~\ref{thm:addorthogonal} in the multiplicative case.
	\begin{theorem}\cite{MSS22}
		\label{thm:multorthogonal}
		Let $A,B \in \Cnn$ be normal matrices.
		Then 
		\begin{equation}
			\label{eq:multorthogonal}
				\chi_{A}(x) \multconv \chi_{B}(x)
				= \int_{\mc{U}(n)} \chi_{AU^* BU}(x) \,dU
				= \frac{1}{2^nn!}\sum_{P\in \mc{P}^{\pm}(n)} \chi_{AP^{T} BP}(x),
			\end{equation}
			where the expectation is taken over the set of unitary matrices $\mc{U}(n)$ or the signed permutation matrices $\mc{P}^\pm(n)$.
	\end{theorem}
	
	\begin{definition}
		The matrices $A,B \in \Cnn$ are in \textbf{multiplicative finite free position} (or in multiplicative FFP) if
		$$
		\chi_{AB}(x) = \chi_{A}(x) \multconv \chi_{B}(x). 
		$$
		The families $\Sn{E},\Sn{F} \subseteq \Cnn$ are in multiplicative finite free position (in multiplicative FFP) if $\chi_{AB}(x) = \chi_{A}(x) \multconv \chi_{B}(x)$ for every $A \in \Sn{E}$ and $B \in \Sn{F}$. 
	\end{definition}
	
	\begin{remarks}
		\begin{enumerate}
			\item []
			\item As shown in \cite{Marcus21}, the property of being in multiplicative FFP can be expressed in terms of mixed discriminants.
			\item As in the additive case, $\chi_{A}(x) \multconv \chi_{B}(x)$ depends on the characteristic polynomials of the matrices and not on the matrices themselves, which is not the case for $\chi_{AB}(x)$. However, if $A$ and $B$ are in multiplicative FFP then so are $PAP^{-1}$ and $PBP^{-1}$, for any regular matrix $P$.
			\item For any two matrices $A,B \in \Cnn$, the leading monomial and the free coefficient of $\chi_{AB}(x)$ and $\chi_{A}(x) \multconv \chi_{B}(x)$ are the same: $x^n$ and $(-1)^n \det(AB)=(-1)^n \det(A)\det(B)$.
		\end{enumerate}
	\end{remarks}
	
	\subsection{Matrices of dimension $2 \times 2$}
	For matrices $A=(a_{ij}), B=(b_{ij}) \in \mathcal{M}_2$,
	we have
	\begin{align*}
		\chi_{AB}(x) &= x^2 - (a_{11} b_{11} + a_{12} b_{21} + a_{21} b_{12} + a_{22} b_{22})x \\ 
		&+ a_{11} a_{22} b_{11} b_{22} + a_{12} a_{21} b_{12} b_{21} - a_{11} a_{22} b_{12} b_{21} - a_{12} a_{21} b_{11} b_{22}
	\end{align*}
	and
	\begin{align*}
		\chi_{A}(x) \multconv \chi_{B}(x) &= x^2 - \frac{1}{2}(a_{11}+a_{22})(b_{11}+b_{22})x \\
		&+ (a_{11} a_{22} - a_{12} a_{21})(b_{11} b_{22} - b_{12} b_{21}).
	\end{align*}
	By comparing the coefficients of the powers of $x$, we get the following result.
	\begin{proposition}
		The matrices $A=(a_{ij}), B=(b_{ij}) \in \mathcal{M}_2$ are in multiplicative FFP if and only if
		$$
		(a_{11} - a_{22}) (b_{22} - b_{11}) = 2(a_{12} b_{21} + a_{21} b_{12}).
		$$
	\end{proposition}
	This is the same condition as in \eqref{eq:addFFP2x2}, although here the identical coefficients are of powers 2 and 0 of $x$, whereas in the additive case the identical  coefficients are of powers 2 an 1 of $x$.
	
	It follows, that also in the multiplicative case, a $2 \times 2$ matrix $A$ is in FFP with itself if and only if its two eigenvalues equal each other.
	The analogue of Proposition~\ref{pr:poly2x2} is the following proposition.
	\begin{proposition}
		Let  $A=(a_{ij}), B=(b_{ij}) \in \mathcal{M}_2$ be in multiplicative FFP. Then $p(A)$ and $q(B)$ are in multiplicative FFP, for any polynomials $p(x)$ and $q(x)$. Moreover, if $A$ is regular then $A^{-1}$ and $q(B)$ are in multiplicative FFP.
	\end{proposition}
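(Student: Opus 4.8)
The plan is to mirror the proof of Proposition~\ref{pr:poly2x2}. First I would invoke the Cayley--Hamilton theorem: for a $2\times 2$ matrix $A$ every power of $A$ lies in the algebra spanned by $I$ and $A$, so for any polynomial $p$ one can write $p(A)=\alpha_0 I+\alpha_1 A$ with $\alpha_0,\alpha_1\in\CC$ depending on $p$ and on $\chi_A$, and likewise $q(B)=\beta_0 I+\beta_1 B$.

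Next I would use the proposition just above: the condition for $A,B\in\Ckk{2}$ to be in multiplicative FFP is literally equation \eqref{eq:addFFP2x2}, namely $(a_{11}-a_{22})(b_{22}-b_{11})=2(a_{12}b_{21}+a_{21}b_{12})$, the same equation as in the additive case. It therefore suffices to check that \eqref{eq:addFFP2x2} is preserved under the affine substitutions $A\mapsto \alpha_0 I+\alpha_1 A$ and $B\mapsto \beta_0 I+\beta_1 B$. This is a one-line verification: under these substitutions the diagonal difference $a_{11}-a_{22}$ and each off-diagonal entry $a_{12},a_{21}$ are multiplied by $\alpha_1$, while $b_{22}-b_{11}$, $b_{12}$, $b_{21}$ are multiplied by $\beta_1$; since both sides of \eqref{eq:addFFP2x2} are homogeneous of degree one in the $A$-entries and of degree one in the $B$-entries, the whole identity just gets multiplied by $\alpha_1\beta_1$ and still holds (and it holds trivially, both sides being zero, when $\alpha_1=0$ or $\beta_1=0$, i.e.\ when $p(A)$ or $q(B)$ is scalar). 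Combining this with the Cayley--Hamilton reduction gives that $p(A)$ and $q(B)$ are in multiplicative FFP. For the statement about $A^{-1}$ I would apply Cayley--Hamilton once more: when $A$ is regular, $A^{-1}=\det(A)^{-1}\bigl(\trace(A)\,I-A\bigr)$, which is again of the affine-linear form $\alpha_0 I+\alpha_1 A$ with $\alpha_1=-\det(A)^{-1}\neq 0$, so the same invariance argument applies.

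I do not expect a genuine obstacle; the content of the proposition is that the multiplicative $2\times 2$ condition coincides with \eqref{eq:addFFP2x2}, together with the elementary scaling behaviour of that equation, so the statement is a formal consequence of facts already recorded. If one preferred not to route through \eqref{eq:addFFP2x2}, an equivalent route is to prove a multiplicative analogue of Proposition~\ref{pr:closedness} directly --- that $A,B$ in multiplicative FFP implies $\lambda_1 A+\lambda_2 I$ and $B$ in multiplicative FFP, by comparing the two relevant characteristic polynomials --- and then feed that into the Cayley--Hamilton reduction; but this is merely a repackaging of the same short computation.
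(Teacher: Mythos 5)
Your proposal is correct and follows essentially the route the paper intends: the Cayley--Hamilton reduction of $p(A)$ and $q(B)$ to affine-linear expressions $\alpha_0 I+\alpha_1 A$ and $\beta_0 I+\beta_1 B$, combined with the observation (made explicitly in the paper just before the proposition) that the $2\times 2$ multiplicative FFP condition coincides with \eqref{eq:addFFP2x2} and is manifestly preserved under such substitutions since both sides scale by $\alpha_1\beta_1$. The inverse case via $A^{-1}=\det(A)^{-1}\bigl(\trace(A)I-A\bigr)$ is likewise the intended argument, so there is nothing to add.
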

	
	\subsection{Matrices of dimension $3 \times 3$}
	As we did in the additive case, we look now at diagonal matrices of dimension $3 \times 3$ that are in additive FFP.

	Let
	$$
	A = \begin{bmatrix*}[r]
		\lambda_1 & 0 & 0 \\
		0 &\lambda_2 & 0 \\
		0 & 0 & \lambda_3,
	\end{bmatrix*}, \qquad \qquad
	B = \begin{bmatrix*}[r]
		\mu_1 & 0 & 0 \\
		0 &\mu_2 & 0 \\
		0 & 0 & \mu_3
	\end{bmatrix*}.
	$$
	Then
	\begin{align*}
	\chi_{AB}(x) &= x^3 - (\lambda_1\mu_1+\lambda_2\mu_2+\lambda_3\mu_3) x^2 \\ 
	&+ (\lambda_1\lambda_2\mu_1\mu_2+\lambda_1\lambda_3\mu_1\mu_3+
	\lambda_2\lambda_3\mu_2\mu_3) x \\
	&- \lambda_1\lambda_2\lambda_3\mu_1\mu_2\mu_3
	\end{align*}
	and
	\begin{align*}
	\chi_{A}(x) &\multconv \chi_{B}(x) = x^3 - \frac{1}{3}(\lambda_1+\lambda_2+\lambda_3)(\mu_1+\mu_2+\mu_3) x^2 \\
	&+ \frac{1}{3}(\lambda_1 \lambda_2+\lambda_1 \lambda_3+\lambda_2 \lambda_3) 
	(\mu_1 \mu_2+\mu_1 \mu_3+\mu_2 \mu_3) x \\
	&- \lambda_1\lambda_2\lambda_3\mu_1\mu_2\mu_3.
	\end{align*}
	By comparing the coefficients of $x^2$ and $x$ in both expressions we get that the diagonal matrices $A$ and $B$ are in multiplicative FFP if and only if the following two equations are satisfied:
	\begin{equation}
		\label{eq:multFFP3x3a}
		\lambda_1\mu_2+\lambda_1\mu_3+\lambda_2\mu_1+\lambda_2\mu_3+
		\lambda_3\mu_1+\lambda_3\mu_2 = 2(\lambda_1\mu_1+\lambda_2\mu_2+\lambda_3\mu_3)
	\end{equation}
	and
	\begin{align}
		\label{eq:multFFP3x3b}
		&\lambda_1\lambda_2\mu_1\mu_3+\lambda_1\lambda_2\mu_2\mu_3+
		\lambda_1\lambda_3\mu_1\mu_2 +\lambda_1\lambda_3\mu_2\mu_3
		+\lambda_2\lambda_3\mu_1\mu_2+\lambda_2\lambda_3\mu_1\mu_3 \\
		&= 	2(\lambda_1\lambda_2\mu_1\mu_2+\lambda_1\lambda_3\mu_1\mu_3+\lambda_2\lambda_3\mu_2\mu_3). 
		\nonumber
	\end{align}
	We see that Equations \eqref{eq:multFFP3x3a} and \eqref{eq:addFFP3x3a} are the same but Equations \eqref{eq:multFFP3x3b} and \eqref{eq:addFFP3x3b} differ from each other.
	The following example demonstrates this difference between the additive and the multiplicative cases.	
	\begin{example}
		\label{ex:mFFPnotaFFP}
		Let
		$$
		A =	\begin{bmatrix*}[r]
			1 & 0 & 0 \\
			0 & 0 & 0 \\
			0 & 0 & 0
		\end{bmatrix*}, \qquad
		B = \begin{bmatrix*}[r]
			1 & 1 & 0 \\
			1 & 1 & 0 \\
			0 & 0 & 1
		\end{bmatrix*}.
		$$
		Then $\chi_{A}(x)=x^3-x^2$, $\chi_{B}(x)=x^3-3x^2+2x$, $\chi_{A+B}(x)=x^3-4x^2+4x+1$ and $\chi_{AB}(x)=x^3-x^2$.
		The additive and multiplicative convolutions are
		$\chi_{A}(x) \addconv \chi_{B}(x) = x^3-4x^2+4x-\frac{2}{3}$
		and $\chi_{A}(x) \multconv \chi_{B}(x) = x^3-x^2$. We see that unlike the situation in dimension $2 \times 2$, here the matrices $A$ and $B$ are in multiplicative FFP but not in additive FFP.
		
	\end{example}
	Let us now consider algebraic operations.
	\begin{example}
		\begin{enumerate}[(i)]
			\item []
			\item \emph{Adding a scalar does not preserve multiplicative FFP:}
			The matrices $A$ and $B$ from example~\ref{ex:mFFPnotaFFP} are in multiplicative FFP,
			but the pair $(I+A,B)$ is not.
			\item \emph{Squaring and inverting do not preserve multiplicative FFP:}
			The matrices
			$$
			A=\begin{bmatrix}
				1 & 0 & 0 \\
				0 & 2 & 0 \\
				0 & 0 & 3 
			\end{bmatrix}
			\qquad
			B=
			\begin{bmatrix}
				1 & -1 & 2 \\ 
				-1 & -2 & 1 \\ 
				2 & 1 & 1 
			\end{bmatrix}
			$$
			are in multiplicative FFP, but the pairs $(A^2,B)$ and $(A^{-1},B)$ are not.
		\end{enumerate}
	\end{example}
		
	\subsection{The lattice of multiplicative finite free affine algebraic sets of matrices}
	The lattice of multiplicative finite free (non-irreducible) affine algebraic sets is constructed in a way similar to the additive one.

	\begin{definition}
		Given a family $\mc{S} \subseteq \Cnn$, its \textbf{multiplicative finite free complement}, denoted $\mct{\mc{S}}$, is
		$$
		\mct{\mc{S}} = \{ B \in \Cnn \, : \, \chi_{AB}(x) = \chi_{A}(x) \multconv \chi_{B}(x), \mbox{ for all } A \in \mc{S} \}. 
		$$
		When $\mct{\mc{E}}_n = \Sn{F}$ and $\mct{\mc{F}}_n = \Sn{E}$, we say that $\Sn{E},\Sn{F}$ form a \textbf{multiplicative finite free complementary pair} (or a multiplicative complementary pair). 
	\end{definition}
	
	\begin{proposition}
		\label{pr:multclosedness}
		If $A$ and $B$ are in multiplicative FFP then so are $\lambda A$ and $B$, for every $\lambda \in \CC$.
	\end{proposition}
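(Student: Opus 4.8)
The plan is to imitate the proof of Proposition~\ref{pr:closedness} from the additive case, exploiting the scaling behaviour of characteristic polynomials under multiplication by a scalar matrix. First I would record the elementary identity $\chi_{\lambda A}(x) = \det(xI-\lambda A) = \lambda^n \det((x/\lambda)I - A) = \lambda^n \chi_A(x/\lambda)$ for $\lambda \neq 0$, and similarly $\chi_{(\lambda A)B}(x) = \chi_{\lambda(AB)}(x) = \lambda^n \chi_{AB}(x/\lambda)$. The case $\lambda = 0$ is trivial since both $\chi_{0}(x)$ and $\chi_0(x)\multconv\chi_B(x)$ equal $x^n$, so I would dispose of it first and then assume $\lambda\neq0$.

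Next I would translate the scaling into the coefficients. If $\chi_A(x) = \sum_{k=0}^n a_k x^{n-k}$, then from $\chi_{\lambda A}(x) = \lambda^n \chi_A(x/\lambda)$ one reads off that the $k$-th coefficient of $\chi_{\lambda A}$ is $\lambda^k a_k$. The key observation is then that the multiplicative convolution from \eqref{eq:multconv} is \emph{homogeneous} in a way that matches: since $\chi_A(x)\multconv\chi_B(x) = \sum_{k=0}^n \frac{(-1)^k}{{n\choose k}} a_k b_k x^{n-k}$, replacing $a_k$ by $\lambda^k a_k$ simply multiplies the $k$-th coefficient by $\lambda^k$, so that
\begin{equation*}
\chi_{\lambda A}(x) \multconv \chi_B(x) = \sum_{k=0}^n \frac{(-1)^k}{{n\choose k}} \lambda^k a_k b_k x^{n-k} = \lambda^n \bigl(\chi_A(x)\multconv\chi_B(x)\bigr)\big|_{x\mapsto x/\lambda},
\end{equation*}
where the last equality is the same coefficient-level rescaling read backwards.

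Finally I would combine the pieces: by hypothesis $\chi_{AB}(x) = \chi_A(x)\multconv\chi_B(x)$, hence $\chi_{(\lambda A)B}(x) = \lambda^n\chi_{AB}(x/\lambda) = \lambda^n\bigl(\chi_A(x)\multconv\chi_B(x)\bigr)\big|_{x\mapsto x/\lambda} = \chi_{\lambda A}(x)\multconv\chi_B(x)$, which is exactly the assertion that $\lambda A$ and $B$ are in multiplicative FFP. I do not anticipate a genuine obstacle here — unlike the additive case, where adding $\lambda I$ acts by a \emph{translation} $x\mapsto x-\lambda$ that interacts nontrivially with the binomial weights, here scaling acts diagonally on the coefficient sequence and multiplicative convolution is visibly diagonal too, so the two operations commute for free. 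The only mild care needed is the separate treatment of $\lambda=0$ and the bookkeeping of which power of $\lambda$ attaches to which coefficient; everything else is a one-line verification.
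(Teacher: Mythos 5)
Your proof is correct and is essentially the paper's own argument: the paper's one-line proof observes exactly that the coefficients of $x^{n-k}$ in both $\chi_{\lambda A}(x)\multconv\chi_B(x)$ and $\chi_{(\lambda A)B}(x)$ are $\lambda^k$ times those of $\chi_A(x)\multconv\chi_B(x)$ and $\chi_{AB}(x)$, which is the same diagonal-homogeneity observation you make (you just spell it out via the substitution $x\mapsto x/\lambda$ and treat $\lambda=0$ separately).
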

	\begin{proof}
		By definition, in both $\chi_{\lambda A}(x) \multconv \chi_{B}(x)$ and $\chi_{\lambda AB}(x)$, the coefficients of $x^{n-k}$ are multiplied by $\lambda ^k$ compared to  the coefficients of $\chi_{A}(x) \multconv \chi_{B}(x)$ and $\chi_{AB}(x)$.
	\end{proof}
	
	\subsection{Diagonal and principally balanced matrices}
	We show here that the pairs of complementary (non-irreducible) affine algebraic sets of matrices with respect to the additive convolution, discussed in Section~\ref{sec:addcomp}, are also complementary pairs in the multiplicative setting.
	\begin{theorem}
		\label{thm:multcomp_1}
		Let $\DD \subseteq \Cnn$ be the family of $n \times n$ diagonal matrices and let $\PB \subseteq \Cnn$ be the family of $n \times n$ principally balanced matrices.
		Then $\DD$ and $\PB$
		form a multiplicative complementary pair.	
	\end{theorem}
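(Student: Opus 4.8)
The plan is to follow the four-step scheme used for Theorem~\ref{thm:addcomp_1}: first show that $\chi_{DB}(x)=\chi_{D}(x)\multconv\chi_{B}(x)$ for every diagonal $D$ and every principally balanced $B$ (which yields simultaneously $\PB\subseteq\mct{\DD}$ and $\DD\subseteq\mct{\PB}$), then prove $\mct{\DD}\subseteq\PB$, and finally $\mct{\PB}\subseteq\DD$; the four inclusions together give $\mct{\DD}=\PB$ and $\mct{\PB}=\DD$. Two of the steps are in fact cleaner than in the additive case, because under $B\mapsto DB$ the principal minors transform multiplicatively.

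For the first step, fix $D=\mathrm{diag}(d_1,\dots,d_n)$ and $B\in\PB$ with common order-$i$ principal minor $m_i$. Since $D$ is diagonal, $(DB)_I=D_IB_I$ for every index set $I$, so the order-$i$ principal minor of $DB$ on $I$ equals $d_I\det(B_I)=d_I m_i$, with $d_I=\prod_{j\in I}d_j$. Summing over $|I|=i$, the coefficient of $x^{n-i}$ in $\chi_{DB}(x)$ is $(-1)^i m_i e_i$, where $e_i=\sum_{|J|=i}d_J$ is the $i$-th elementary symmetric function of the $d_j$. On the other side $\chi_D(x)=\sum_i(-1)^i e_i x^{n-i}$ and, by Lemma~\ref{lem:prbalanced}, $\chi_B(x)=\sum_i(-1)^i{n\choose i}m_i x^{n-i}$; plugging into \eqref{eq:multconv}, the coefficient of $x^{n-i}$ in $\chi_D(x)\multconv\chi_B(x)$ is $\frac{(-1)^i}{{n\choose i}}\cdot(-1)^i e_i\cdot(-1)^i{n\choose i}m_i=(-1)^i e_i m_i$, which matches. (Equivalently one can argue via $\wedge^i(DB)=(\wedge^i D)(\wedge^i B)$ and take traces.)

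For $\mct{\DD}\subseteq\PB$, let $B\in\mct{\DD}$, and for $K\subseteq[n]$ with $|K|=k$, $1\le k\le n-1$, let $P_K\in\DD$ be the $0$--$1$ diagonal matrix with ones exactly on $K$. Writing $P_KB$ in block form shows $\chi_{P_KB}(x)=x^{n-k}\chi_{B_K}(x)$, so the coefficient of $x^{n-k}$ in $\chi_{P_KB}(x)$ equals $(-1)^k\det(B_K)$. On the other hand $\chi_{P_K}(x)=x^{n-k}(x-1)^k$ has $(-1)^k$ as its coefficient of $x^{n-k}$, while the coefficient of $x^{n-k}$ in $\chi_B(x)$ is $(-1)^k\sum_{|I|=k}\det(B_I)$; hence by \eqref{eq:multconv} the coefficient of $x^{n-k}$ in $\chi_{P_K}(x)\multconv\chi_B(x)$ is $\frac{(-1)^k}{{n\choose k}}\sum_{|I|=k}\det(B_I)$. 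Since $B\in\mct{\DD}$, equating the two gives $\det(B_K)=\frac{1}{{n\choose k}}\sum_{|I|=k}\det(B_I)$; the right-hand side does not depend on $K$, so all principal minors of $B$ of order $k$ coincide, for every $k$, i.e.\ $B\in\PB$. (This is simpler than the additive proof, where a limit $\lambda\to\infty$ was needed to isolate the leading term; here $P_K$ already annihilates the unwanted minors.)

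For $\mct{\PB}\subseteq\DD$, suppose $A=(a_{ij})$ has an off-diagonal entry $a_{lk}\ne0$, $l\ne k$. The matrix $E_{kl}$ of Lemma~\ref{lem:Ekl} lies in $\PB$ (all its principal minors vanish) and $\chi_{E_{kl}}(x)=x^n$, so by \eqref{eq:multconv} one gets $\chi_A(x)\multconv\chi_{E_{kl}}(x)=x^n$. But $AE_{kl}=(Ae_k)e_l^{T}$ is rank one with trace $a_{lk}$, hence $\chi_{AE_{kl}}(x)=x^{n-1}(x-a_{lk})\ne x^n$. Thus $A\notin\mct{\PB}$, so $\mct{\PB}\subseteq\DD$; together with $\DD\subseteq\mct{\PB}$ from the first step this finishes the proof. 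The only real work is the sign/binomial bookkeeping in the first two steps, and I do not anticipate a genuine obstacle: the identity $(DB)_I=D_IB_I$ is exactly what makes the principally balanced structure line up with the multiplicative convolution \eqref{eq:multconv}, just as $(D+B)_I$ expanded over sub-minors did in the additive case.
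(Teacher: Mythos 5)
Your proof is correct and follows the same overall architecture as the paper's (show $\chi_{DB}=\chi_D\multconv\chi_B$ for $D\in\DD$, $B\in\PB$; then $\mct{\DD}\subseteq\PB$; then $\mct{\PB}\subseteq\DD$), and your first step is essentially identical to the paper's computation via $\det((DB)_I)=d_I\det(B_I)=d_Im_i$. Where you genuinely diverge is in the two reverse inclusions, and in both cases your route is cleaner. For $\mct{\DD}\subseteq\PB$ the paper tests against the parametric diagonal matrix $D(\lambda,K)$ with $\lambda$ on $K$ and $1$ off $K$, differentiates $n-k$ times at $0$, and sends $\lambda\to\infty$ to isolate $\det(B_K)$; your $0$--$1$ projection $P_K$ kills the unwanted minors outright (only the principal minor $\det((P_KB)_I)$ with $I=K$ survives at order $k$), so the identity $\det(B_K)=\binom{n}{k}^{-1}\sum_{|I|=k}\det(B_I)$ drops out with no limit. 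For $\mct{\PB}\subseteq\DD$ the paper tests against $I_n+E_{i_0j_0}$, using that $(x-1)^n$ is the multiplicative unit so that $\chi_A\multconv\chi_B=\chi_A$ while $\trace(AB)=\trace(A)+a$ perturbs the $x^{n-1}$ coefficient; you instead test against $E_{kl}$ itself, using that $x^n$ absorbs under $\multconv$ while $AE_{kl}=(Ae_k)e_l^T$ is rank one with trace $a_{lk}\neq0$, so $\chi_{AE_{kl}}=x^{n-1}(x-a_{lk})\neq x^n$. Both of your test matrices lie in $\PB$ (respectively $\DD$) as required, and the rank-one observation makes the final step a one-line computation. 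The only thing your version gives up is uniformity with the additive proof (where the limit argument is actually needed); what it buys is the elimination of the limit and of the derivative bookkeeping.
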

	\begin{proof}
		The theorem clearly holds for $n=1$, so let us assume that $n \geq 2$.
		First, we show that for arbitrary $D \in \DD$ and $B \in \PB$, the equality $\chi_{DB}(x) = \chi_{D}(x) \multconv \chi_{B}(x)$ holds.
		Let $D = \mathrm{diag} (d_1,\ldots,d_n)$,
		$$
		\chi_D (x) = \sum_{i=0}^{n} a_i x^{n-i}
		= \sum_{i=0}^{n} \bigg( (-1)^i \sum_{J \subseteq [n],\, |J|=i} d_J \bigg) x^{n-i},
		$$
		where $d_J = d_{j_1} \cdots d_{j_i}$ for $J=\{j_1, \ldots, j_i\}$.
		Let also
		$$\chi_B (x) = \sum_{i=0}^{n} b_i x^{n-i} = \sum_{i=0}^{n} (-1)^i {n \choose i} m_i x^{n-i},
		$$
		where $m_i$, $i=0,\ldots,n$, is the value of a principal minor of $B$ of order $i$.
		Since
		$$
		DB = 	\begin{bmatrix*}
			d_1 B_{1 \, \cdot} \\
			d_2 B_{2 \, \cdot}  \\
			\vdots \\
			d_n B_{n \, \cdot}
		\end{bmatrix*},
		$$
		where $B_{i \, \cdot}$ denotes the $i$-th row of $B$, we have
		\begin{align}
			\label{eq:chi_DB_1}
			\chi_{DB}(x) &=	\sum_{i=0}^{n} \bigg( (-1)^i \sum_{J \subseteq [n],\, |J|=i} \det((DB)_J) \bigg) x^{n-i}\nonumber \\
			&= \sum_{i=0}^{n} \bigg( (-1)^i \sum_{J \subseteq [n],\, |J|=i} d_J\det(B_J) \bigg) x^{n-i} \nonumber \\
			&= \sum_{i=0}^{n} \bigg( (-1)^i \sum_{J \subseteq [n],\, |J|=i} d_J m_i  \bigg) x^{n-i} \nonumber \\
			&= \sum_{i=0}^{n} a_i m_i x^{n-i} = \sum_{i=0}^{n} a_i \frac{(-1)^i b_i}{{n \choose i}} x^{n-i}.
		\end{align}
		By Definition~\ref{def:multconv}, this is exactly $\chi_{D}(x) \multconv \chi_{B}(x)$ and
		it follows that $\PB \subseteq \mct{\mc{D}}_n$ and $\DD \subseteq \mct{\mc{B}}_n$
		
		Next, we show that $\mct{\mc{D}}_n \subseteq \PB$.
		Let $B=(b_{ij}) \in \mct{\mc{D}}_n$.
		Let $K \subseteq [n]$, $|K|=k$, $0 < k < n$, and let $D(\lambda, K)=(d_{ij})$ be the parametric diagonal matrix with entries
		$$
		d_{ij} = \left\{ 
		\begin{array}{rcl}
			0 \quad &\mbox{for}& i \neq j, \\
			\lambda \quad &\mbox{for}& i=j \in K, \\
			1 \quad &\mbox{for}& i=j \notin K.
		\end{array}
		\right.
		$$
		\begin{align*}
			\chi_{D(\lambda,K)B}(x) &= \det (xI - (D(\lambda,K)B)) \\
			&= x^n + p_1(\lambda) x^{n-1} +\cdots+ p_{n-1}(\lambda) x + (-1)^n \det (D(\lambda,K)B),
		\end{align*}
		where $p_j(\lambda)$ is a polynomial in $\lambda$ of degree at most $\max (j,k)$.
		Let $p^{(i)}(x)$ be the $i$-th derivative of $p(x)$ with respect to $x$.
		Then 
		\begin{align*}
			(\chi_{D(\lambda,K)B})^{(n-k)}(0) &= (n-k)! p_k(\lambda) \\
			&= (-1)^k (n-k)! \lambda^k \det(B_K) + q(\lambda),
		\end{align*}
		where $q(\lambda)$ is a polynomial in $\lambda$ of degree less than $k$.
		We have
		\begin{equation}
			\label{eq:limproduct}
			\lim_{\lambda \to \infty} 
			\frac{(\chi_{D(\lambda,K)B})^{(n-k)}(0)}{(-1^k)(n-k)!\lambda^{k}} = \det(B_K).
		\end{equation}
		
		We now compute the same limit for $\chi_{D(\lambda,K)}(x) \multconv \chi_{B}(x)$.
		The characteristic polynomial of $D(\lambda,K)(x)$ is
		$$
		\chi_{D(\lambda,K)}(x) = \sum_{i=0}^{n} a_i x^{n-i} = (x-\lambda)^k (x-1)^{n-k}. 
		$$ 
		Then
		\begin{equation}
			\label{eq:ak}
			a_k = (-1)^k \lambda^k + g(\lambda),
		\end{equation}
		where $g(\lambda)$ is a polynomial in $\lambda$ of degree less than $k$.
		The characteristic polynomial of $B$ is
		$\chi_B (x) = \sum_{i=0}^{n} b_i x^{n-i}$ with
		\begin{equation}
			\label{eq:bk}
			b_k = (-1)^k \sum_{I \subseteq [n],\, |I|=k} \det(B_I).
		\end{equation}
		
		By \eqref{eq:multconv},
		\begin{align*}
			(\chi_{D(\lambda,K)} \multconv \chi_{B})^{(n-k)}(0)
			&= \left( \sum_{k=0}^{n} \frac{(-1)^k }{{n \choose k}} a_k b_k x^{n-k} \right)^{(n-k)} (0) \\
			&= \frac{(-1)^k (n-k)!}{{n \choose k}} a_k b_k
		\end{align*}
		Substituting for $a_k$ and $b_k$ the expressions in \eqref{eq:ak} and \eqref{eq:bk}, we get
		$$
		= \frac{(-1)^k (n-k)!}{{n \choose k}} ((-1)^k \lambda^k + g(\lambda)) (-1)^k \sum_{I \subseteq [n],\, |I|=k} \det(B_I).
		$$
		Computing the limit as in \eqref{eq:limproduct} gives
		\begin{equation}
			\label{eq:limmconv}
			\lim_{\lambda \to \infty} 
			\frac{(\chi_{D(\lambda,K)} \multconv \chi_{B})^{(n-k)}(0)}{(-1^k)(n-k)!\lambda^{k}} = 
			\frac{1}{{n \choose k}} \sum_{I \subseteq [n],\, |I|=k} \det(B_I).
		\end{equation}
		Comparing \eqref{eq:limproduct} with \eqref{eq:limmconv}, we get that
		$$
		\det(B_K) = \frac{1}{{n \choose k}} \sum_{I \subseteq [n],\, |I|=k}\det(B_I),
		$$
		and as in Theorem~\ref{thm:addcomp_1}, it follows that $B$ is principally balanced and therefore $\mct{\mc{D}}_n \subseteq \PB$.
		Together with the inclusion in the opposite direction, we have $\mct{\mc{D}}_n = \PB$.
		
		Finally, we show that $\mct{\mc{B}}_n \subseteq \DD$.
		So, assume by contradiction that $A=(a_{ij}) \in \mct{\mc{B}}_n$ and $A \notin \DD$.
		It follows that for some $(j_0,i_0)$, $j_0 \neq i_0$, $a_{j_0i_0} = a \neq 0$.
		Let $B=(b_{ij)})$ be the matrix $B=I_n + E_n(i_0j_0)$:
		$$
		b_{ij} = \left\{ 
		\begin{array}{rl}
			1 \quad &\mbox{for } i=j, \\
			1 \quad &\mbox{for } (i,j)=(i_0,j_0), \\
			0 \quad &\mbox{otherwise.}
		\end{array}
		\right.
		$$
		Clearly, $B \in \PB$.
		Let the characteristic polynomial of $A$ be $\chi_{A}(x) = \sum_{i=0}^{n} a_i x^{n-i}$ and that of $B$:
		$$
		\chi_{B}(x)  =  \sum_{i=0}^{n} b_i x^{n-i} = (x-1)^{n} = \sum_{i=0}^{n} {n \choose i} (-i)^i x^{n-i}.
		$$
		Since $\trace(AB) = \trace(A) + a$, the characteristic polynomial of $AB$ is
		\begin{equation}
			\label{eq:chi_AB}
			\chi_{AB}(x) = x^{n} + (a_1 - a) x^{n-1} + q(x), 
		\end{equation}
		where $q(x)$ is a polynomial in $x$ of degree less than $n-1$.
		
		The multiplicative convolution of $\chi_{A}(x)$ and $\chi_{B}(x)$ is
		\begin{align}
			\label{eq:chi_A_mconv_chi_B}
			\chi_{A}(x) \multconv \chi_{B}(x)
			&= \sum_{k=0}^{n} \frac{(-1)^k }{{n \choose k}} a_k b_k x^{n-k} \\
			&= \sum_{k=0}^{n} \frac{(-1)^k }{{n \choose k}} a_k {n \choose k} (-1)^k x^{n-k} \\
			&= \sum_{k=0}^{n} a_k x^{n-k} = \chi_{A}(x). 
		\end{align}
		We see that the coefficients of $x^{n-1}$ in \eqref{eq:chi_AB} and in \eqref{eq:chi_A_mconv_chi_B} are different from each other, implying that 
		$\chi_{AB}(x) \neq \chi_{A}(x) \multconv \chi_{B}(x)$.
		It follows that $\mct{\mc{B}}_n \subseteq \DD$ and with the inclusion in the opposite direction, we have $\mct{\mc{B}}_n = \DD$.
	\end{proof}	
	
	The next corollary is analogues to Corollary~\ref{cor:commuting} and is proved in a similar way.
	\begin{corollary}
		Let $\Sn{A} \subseteq \Cnn$ be a maximal family of commuting diagonalizable matrices and let $\PB \subseteq \Cnn$ be the family of principally balanced matrices. Then there exists a non-singular matrix $P$, such that $\Sn{A}$ and $P^{-1} {\PB} P$
		form a multiplicative complementary pair. \\
	\end{corollary}
	
	\subsection{Triangular matrices}
	As with the additive case, the upper triangular matrices and the upper triangular matrices with constant diagonal
	form a multiplicative complementary pair,
	and similarly for lower triangular matrices.
	\begin{theorem}
		\label{thm:multcomptriangular}
		Let $\UT \subseteq \Cnn$ be the family of $n \times n$ upper triangular matrices and let $\PBUT \subseteq \Cnn$ be the family of $n \times n$ upper triangular matrices with constant diagonal.
		Then $\UT$ and $\PBUT$
		form a multiplicative complementary pair.
	\end{theorem}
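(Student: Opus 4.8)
The plan is to follow the proof of Theorem~\ref{thm:addcomptriangular} line for line, replacing each appeal to the additive diagonal result by its multiplicative counterpart Theorem~\ref{thm:multcomp_1}. Concretely I would establish the four inclusions $\PBUT\subseteq\mct{\UT}$, $\UT\subseteq\mct{\PBUT}$, $\mct{\PBUT}\subseteq\UT$ and $\mct{\UT}\subseteq\PBUT$. The first two are in fact a single statement, since multiplicative FFP is a symmetric relation: $\chi_{AB}=\chi_{BA}$ and $\multconv$ is commutative.

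For $\PBUT\subseteq\mct{\UT}$ I would argue as follows. Given $T\in\UT$ and $C\in\PBUT$, a product of upper triangular matrices is again upper triangular and its $(i,i)$ entry is the product of the $(i,i)$ entries of the factors, so $\chi_{TC}(x)=\prod_{i=1}^{n}(x-T_{ii}C_{ii})$; likewise $\chi_T$ and $\chi_C$ depend only on the diagonals of $T$ and $C$. Passing to $D_T=\mathrm{diag}(T_{11},\dots,T_{nn})$ and $D_C=\mathrm{diag}(C_{11},\dots,C_{nn})$ we get $\chi_{TC}=\chi_{D_TD_C}$, $\chi_T=\chi_{D_T}$, $\chi_C=\chi_{D_C}$, and since $C$ has constant diagonal, $D_C$ is a scalar matrix, hence principally balanced. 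Theorem~\ref{thm:multcomp_1} then yields $\chi_{D_TD_C}=\chi_{D_T}\multconv\chi_{D_C}$, that is $\chi_{TC}=\chi_T\multconv\chi_C$; this gives $\PBUT\subseteq\mct{\UT}$ and, by symmetry, $\UT\subseteq\mct{\PBUT}$.

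The ``detection'' inclusions come next. If $A=(a_{ij})\notin\UT$, pick $p>q$ with $a_{pq}\ne0$ and take the witness $B=I+E_{qp}$, which (because $q<p$) is upper triangular with constant diagonal $1$, so $B\in\PBUT$. As in the proof of Theorem~\ref{thm:multcomp_1}, $\chi_B(x)=(x-1)^{n}$ is the identity for $\multconv$, so $\chi_A(x)\multconv\chi_B(x)=\chi_A(x)$, whereas $\trace(AB)=\trace(A)+a_{pq}$ forces the coefficients of $x^{n-1}$ in $\chi_{AB}$ and in $\chi_A\multconv\chi_B$ to differ; hence $A\notin\mct{\PBUT}$, proving $\mct{\PBUT}\subseteq\UT$. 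Since $B\in\PBUT\subseteq\UT$, the same computation gives $\mct{\UT}\subseteq\UT$. Finally $\DD\subseteq\UT$ gives $\mct{\UT}\subseteq\mct{\DD}=\PB$ by Theorem~\ref{thm:multcomp_1}, so any $C\in\mct{\UT}$ lies in $\UT\cap\PB$; as a triangular matrix is principally balanced precisely when its diagonal is constant, $\UT\cap\PB=\PBUT$, whence $\mct{\UT}\subseteq\PBUT$.

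Combining the inclusions gives $\mct{\PBUT}=\UT$ and $\mct{\UT}=\PBUT$, so $(\UT,\PBUT)$ is a multiplicative complementary pair. For the lower triangular statement I would simply transpose: $M\mapsto M^{T}$ interchanges $\UT\leftrightarrow\LT$ and $\PBUT\leftrightarrow\PBLT$, fixes every characteristic polynomial and the operation $\multconv$, and preserves multiplicative FFP because $\chi_{AB}=\chi_{(AB)^{T}}=\chi_{B^{T}A^{T}}=\chi_{A^{T}B^{T}}$. I expect no genuine obstacle here, since the analytic content is already packaged in Theorem~\ref{thm:multcomp_1}; the only points requiring a little care are the diagonal reduction of $\chi_{TC}$ (where upper triangularity of both factors is essential) and verifying that the chosen witness matrix lies simultaneously in $\UT$ and in $\PB$.
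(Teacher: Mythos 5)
Your proposal is correct and follows essentially the same route as the paper: the paper's proof simply declares the argument analogous to the additive triangular case (diagonal reduction plus Theorem~\ref{thm:multcomp_1}) and singles out the same witness matrix $B=I_n+E_{i_0j_0}$, whose characteristic polynomial $(x-1)^n$ is the $\multconv$-identity while $\trace(AB)\neq\trace(A)$. Your write-up just makes the steps the paper leaves implicit fully explicit.
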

	\begin{proof}
		The lines of proof are analogous to those of Theorem~\ref{thm:addcomptriangular}.
		We only mention that when $A=(a_{ij}) \notin \UT$ with $a_{j_0i_0} = a \neq 0$, for some $j_0 > i_0$, then we define $B=(b_{ij}) \in \PBUT$ to be the matrix $B=I_n + E_n(i_0j_0)$ as in Theorem~\ref{thm:multcomp_1}.
		Then $\chi_{A}(x) \multconv \chi_{B}(x) = \chi_{A}(x)$ whereas $\chi_{AB}(x) \neq \chi_{A}(x)$.
	\end{proof}	
	
	\subsection{Scalar matrices}
	Also for scalar matrices the multiplicative convolution behaves analogously to the additive convolution.
	\begin{theorem}
		\label{thm:multcomp_2}
		Let $\Sn{S} \subseteq \Cnn$ be the family of $n \times n$ scalar matrices.
		Then $\Sn{S}$ and $\Cnn$
		form a multiplicative complementary pair.
	\end{theorem}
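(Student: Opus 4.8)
The plan is to mirror the proof of Theorem~\ref{thm:addcomp_2}, with the multiplicative neutral element replacing the additive one. It suffices to establish the single identity $\mct{\Cnn}=\Sn{S}$; the companion identity $\mct{\Sn{S}}=\Cnn$ then follows formally, since $\mct{\Sn{S}}=\bimct{\Cnn}\supseteq\Cnn$ while the reverse inclusion $\mct{\Sn{S}}\subseteq\Cnn$ is trivial.

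First I would show $\Sn{S}\subseteq\mct{\Cnn}$, i.e. that every scalar matrix is in multiplicative FFP with every $A\in\Cnn$. The key point is that $I$ is the neutral element for $\multconv$: writing $\chi_{I}(x)=(x-1)^{n}=\sum_{k=0}^{n}\binom{n}{k}(-1)^{k}x^{n-k}$ and $\chi_{A}(x)=\sum_{k=0}^{n}a_{k}x^{n-k}$, the defining formula \eqref{eq:multconv} gives $\chi_{A}(x)\multconv\chi_{I}(x)=\sum_{k=0}^{n}\frac{(-1)^{k}}{\binom{n}{k}}a_{k}(-1)^{k}\binom{n}{k}x^{n-k}=\chi_{A}(x)=\chi_{AI}(x)$, so $A$ and $I$ are in multiplicative FFP. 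Since $\multconv$ is commutative (formula \eqref{eq:multconv} is symmetric in the two polynomials) and $\chi_{AB}(x)=\chi_{BA}(x)$, being in multiplicative FFP is a symmetric relation; hence $I$ and $A$ are in multiplicative FFP, and Proposition~\ref{pr:multclosedness} then yields that $\lambda I$ and $A$ are in multiplicative FFP for every $\lambda\in\CC$. Using symmetry once more, $A$ is in multiplicative FFP with every scalar matrix, and as $A$ was arbitrary this gives $\Sn{S}\subseteq\mct{\Cnn}$. (Alternatively one checks directly that $\chi_{\lambda A}(x)=\sum_{k}a_{k}\lambda^{k}x^{n-k}=\chi_{A}(x)\multconv\chi_{\lambda I}(x)$.)

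For the reverse inclusion I would invoke Theorem~\ref{thm:multcomp_1}. Because $\Cnn\supseteq\DD$ and $\Cnn\supseteq\PB$, any $B\in\mct{\Cnn}$ lies in particular in $\mct{\DD}$ and in $\mct{\PB}$, so $\mct{\Cnn}\subseteq\mct{\DD}\cap\mct{\PB}=\PB\cap\DD$ by Theorem~\ref{thm:multcomp_1}. A diagonal matrix is principally balanced exactly when all of its order-$1$ principal minors, i.e. its diagonal entries, agree, that is, exactly when it is scalar; hence $\PB\cap\DD=\Sn{S}$ and $\mct{\Cnn}\subseteq\Sn{S}$. Combined with the previous step this gives $\mct{\Cnn}=\Sn{S}$, and then $\mct{\Sn{S}}=\Cnn$ as noted above, so $\Sn{S}$ and $\Cnn$ form a multiplicative complementary pair. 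I do not anticipate any real obstacle here: the statement is essentially a corollary of Theorem~\ref{thm:multcomp_1}, the only genuinely new ingredient being the elementary observation that $I$ is $\multconv$-neutral, which—combined with Proposition~\ref{pr:multclosedness}—places every scalar matrix in $\mct{\Cnn}$, exactly the multiplicative analogue of the role played by the zero matrix and $x^{n}$ in the additive case.
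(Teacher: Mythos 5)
Your proposal is correct and follows essentially the same route as the paper: the forward inclusion via the computation $\chi_{A}(x)\multconv\chi_{\lambda I}(x)=\sum_k a_k\lambda^k x^{n-k}=\chi_{\lambda A}(x)$ (the paper does this direct check, which you give as your alternative; your detour through $I$ being $\multconv$-neutral plus Proposition~\ref{pr:multclosedness} is an equivalent repackaging), and the reverse inclusion $\mct{\Cnn}\subseteq\mct{\DD}\cap\mct{\PB}=\PB\cap\DD=\Sn{S}$ via Theorem~\ref{thm:multcomp_1}, exactly as the paper does by mirroring Theorem~\ref{thm:addcomp_2}. No gaps.
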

	\begin{proof}
		First, we show that when $A=(a_{ij}) \in \Cnn$ and $C = \mathrm{diag} (c,\ldots,c)$ then $\chi_{AC}(x) = \chi_{A}(x) \multconv \chi_{C}(x)$.
		As before, let
		$$
		\chi_A (x) = \sum_{i=0}^{n} a_i x^{n-i},
		$$
		and
		$$
		\chi_C (x) = \sum_{i=0}^{n} c_i x^{n-i} = 
		\sum_{i=0}^{n} (-1)^i {n \choose i} c^i x^{n-i}.
		$$
		Then
		$$
		\chi_{AC}(x) = \sum_{k=0}^{n} a_k c^k x^{n-k}
		$$
		and
		\begin{align}
			\chi_{A}(x) \multconv \chi_{C}(x)
			&= \sum_{k=0}^{n} \frac{(-1)^k }{{n \choose k}} a_k c_k x^{n-k} \nonumber \\
			&=	\sum_{k=0}^{n} \frac{(-1)^k }{{n \choose k}} a_k (-1)^k {n \choose k} c^k x^{n-k} \nonumber \\
			&= \sum_{k=0}^{n} a_k c^k x^{n-k} = \chi_{AC}(x). \nonumber
		\end{align}
		The rest of the proof is similar to that of Theorem~\ref{thm:addcomp_2}.
	\end{proof}
	
	\subsection{Matrices in multiplicative FFP with themselves}
	The following result shows the equivalence of additive and
        multiplicative FFP, see Theorem~\ref{thm:selfaddfree}.
	\begin{proposition}
		\label{pr:selfmultfree}
		Let $A \in \Cnn$. Then $A$ is in multiplicative FFP with itself if and only if 
		it is the sum of a nilpotent and a scalar matrix,
                i.e., the multiple of a unipotent matrix.
	\end{proposition}
	\begin{proof}
		By similarity transformation, we can assume, without loss of generality, that $A$ is in upper triangular form with its eigenvalues $\lambda_1, \lambda_2,\ldots,\lambda_n$ on the diagonal. 
		Let $\chi_A (x) = \sum_{k=0}^{n} a_k x^{n-k}$, then
		$$
		a_k = (-1)^k \sum_{\substack{I = \{i_1,i_2,\ldots,i_k\} \\
				1 \leq i_1<i_2<\cdots<i_k\leq n}}
		\lambda_{i_1}\lambda_{i_2}\cdots \lambda_{i_k}.
		$$
		Hence
		\begin{equation}
			\label{eq:charpolyAsquared}
			\chi_{A^2} (x) = \sum_{k=0}^{n} 
			(-1)^k \sum_{\substack{I = \{i_1,i_2,\ldots,i_k\} \\
					1 \leq i_1<i_2<\cdots<i_k\leq n}}
			\lambda^2_{i_1}\lambda^2_{i_2}\cdots \lambda^2_{i_k} \,
			x^{n-k}.
		\end{equation}
		On the other hand,
		\begin{align}
			\label{eq:selfmultconv}
			\chi_A(x) \multconv \chi_A(x)
			&= \sum_{k=0}^{n} \frac{(-1)^k }{{n \choose k}} a^2_k x^{n-k} \nonumber \\
			&= \sum_{k=0}^{n} 
			\frac{(-1)^k }{{n \choose k}} \Big(\sum_{\substack{I = \{i_1,i_2,\ldots,i_k\} \\
					1 \leq i_1<i_2<\cdots<i_k\leq n}}
			\lambda_{i_1}\lambda_{i_2}\cdots \lambda_{i_k}\Big)^2
			x^{n-k}.
		\end{align}
		By comparing the corresponding coefficients in \eqref{eq:charpolyAsquared}  and \eqref{eq:selfmultconv}, for $k=1,\ldots,n-1$, we get that  $\lambda_1 = \lambda_2 = \cdots = \lambda_n$, that is, $A$ is the sum of a nilpotent and a scalar matrix.
		
		In the other direction, when 
		$A$ is the sum of a nilpotent and a scalar matrix,
		then $A$ is similar to an upper triangular matrix with constant diagonal and, by Theorem~\ref{thm:multcomptriangular}, $A$ is in multiplicative FFP with itself.
	\end{proof}
	
	\subsection{Moments and cumulants of products of matrices  in multiplicative FFP}
	Moments and cumulants for the multiplicative convolution of polynomials
        were handled by Arizmendi et al.\ \cite{AGP21}. Among others, they derived a formula for the finite free cumulants of the multiplicative convolution of polynomials $p$ and $q$ in terms of the finite free cumulants of $p$ and of $q$. They also were able to express the moments of the empirical root distribution of $p \multconv q$ in terms of the finite free cumulants of $p$ and the moments of $q$. 
	As shown in \cite{AGP21}, finite free multiplicative convolution converges to free multiplicative convolution.
	
	As in the additive case, when $A$ and $B$ are in multiplicative FFP, we can compare the coefficients of $\chi_{A}(x) \multconv \chi_{B}(x)$ with the corresponding coefficients of $\chi_{AB}(x)$ and derive formulas for the moments of $AB$ in terms of those of $A$ and $B$.
	Next, we derive these formulas for the first and second moments.
	\begin{proposition}
		Let $A,B \in \Cnn$ be in multiplicative FFP. Then
		\begin{align}
			m_1(AB) &= m_1(A) m_1(B), \\
			m_2(AB) &= \frac{n}{n-1} [m_2(A)m_1^2(B) + m_1^2(A)m_2(B) - m_1^2(A)m_1^2(B)] \\
			&-\frac{1}{n-1}m_2(A)m_2(B).  \nonumber
		\end{align}
		\label{pr:moments_product}
	\end{proposition}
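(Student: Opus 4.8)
The plan is to equate the coefficients of $x^{n-1}$ and $x^{n-2}$ in $\chi_{AB}(x)$ and in $\chi_{A}(x)\multconv\chi_{B}(x)$, which coincide by the multiplicative FFP hypothesis, and then to convert coefficients into moments via Newton's identities. Write $\chi_{A}(x)=\sum_{k=0}^{n}a_{k}x^{n-k}$, $\chi_{B}(x)=\sum_{k=0}^{n}b_{k}x^{n-k}$ and $\chi_{AB}(x)=\sum_{k=0}^{n}c_{k}x^{n-k}$. Then $a_{1}=-\trace(A)=-n\,m_{1}(A)$ and, by Newton's identity, $a_{2}=\frac{1}{2}\bigl(n^{2}m_{1}^{2}(A)-n\,m_{2}(A)\bigr)$, with the analogous expressions for $b_{1},b_{2}$ and for $c_{1},c_{2}$ (the latter in terms of $m_{1}(AB)$ and $m_{2}(AB)$).

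For part (1): by Definition~\ref{def:multconv} the coefficient of $x^{n-1}$ in $\chi_{A}(x)\multconv\chi_{B}(x)$ is $\frac{-1}{{n\choose 1}}a_{1}b_{1}=-n\,m_{1}(A)m_{1}(B)$; equating this with $c_{1}=-n\,m_{1}(AB)$ and cancelling $-n$ gives $m_{1}(AB)=m_{1}(A)m_{1}(B)$. (This is exactly the identity already obtained above for matrices in additive FFP, and indeed it follows from $\trace(AB)=\trace(BA)$ alone, without any freeness.) For part (2): the coefficient of $x^{n-2}$ in $\chi_{A}(x)\multconv\chi_{B}(x)$ is $\frac{1}{{n\choose 2}}a_{2}b_{2}$; setting it equal to $c_{2}$, pulling a factor $n$ out of each quadratic via $n^{2}m_{1}^{2}-n\,m_{2}=n\bigl(n\,m_{1}^{2}-m_{2}\bigr)$, and using ${n\choose 2}=\frac{n(n-1)}{2}$, one arrives at $n\,m_{1}^{2}(AB)-m_{2}(AB)=\frac{1}{n-1}\bigl(n\,m_{1}^{2}(A)-m_{2}(A)\bigr)\bigl(n\,m_{1}^{2}(B)-m_{2}(B)\bigr)$. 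Substituting $m_{1}^{2}(AB)=m_{1}^{2}(A)m_{1}^{2}(B)$ from part (1), expanding the right-hand side and solving for $m_{2}(AB)$: the $m_{1}^{2}(A)m_{1}^{2}(B)$-terms combine with coefficient $\frac{(n-1)n-n^{2}}{n-1}=\frac{-n}{n-1}$, and collecting the remaining terms yields precisely $m_{2}(AB)=\frac{n}{n-1}\bigl[m_{2}(A)m_{1}^{2}(B)+m_{1}^{2}(A)m_{2}(B)-m_{1}^{2}(A)m_{1}^{2}(B)\bigr]-\frac{1}{n-1}m_{2}(A)m_{2}(B)$.

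The argument is purely computational; the only points that demand care are the binomial weights ${n\choose k}$ in the multiplicative convolution and the factor-of-$n$ bookkeeping between the normalized moments $m_{k}$ and the power sums $\trace(A^{k})$, so I do not expect any genuine obstacle. The same scheme — equating the coefficient of $x^{n-k}$ and using Newton's identities — produces a formula for each $m_{k}(AB)$; in contrast to the additive case (Theorem~\ref{thm:momentsofsum}), where dimension-dependence first appears at the fourth moment, here already the formula for $m_{2}(AB)$ depends on $n$, and the higher ones become progressively more involved.
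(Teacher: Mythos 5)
Your proof is correct and takes essentially the same route as the paper: equate the coefficients of $x^{n-1}$ and $x^{n-2}$ in $\chi_{AB}(x)$ and in $\chi_{A}(x)\multconv\chi_{B}(x)$, translate the coefficients into moments via Newton's identities, and solve for $m_1(AB)$ and $m_2(AB)$; your algebra for part (2) checks out (and your signs are in fact cleaner than the paper's). One caveat: the parenthetical claim that part (1) ``follows from $\trace(AB)=\trace(BA)$ alone, without any freeness'' is false --- for $A=E_{12}$ and $B=E_{21}$ one has $m_1(A)=m_1(B)=0$ yet $m_1(AB)=1/n$ --- so the multiplicative FFP hypothesis is genuinely needed there and that aside should be deleted.
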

	\begin{proof}
		Let $\chi_A (x) = \sum_{k=0}^{n} a_k x^{n-k}$ and  $\chi_B (x) = \sum_{k=0}^{n} b_k x^{n-k}$.
		The coefficient of $x^{n-1}$ in $\chi_{A}(x) \multconv \chi_{B}(x)$ is
		\begin{align}
			\label{eq:coeff1multconv}
			-\frac{1}{n} a_1 b_1 = -\frac{1}{n} \trace(A)\trace(B)
			= -n \cdot m_1(A)m_1(B),
		\end{align}
		whereas the coefficient of $x^{n-1}$ in $\chi_{AB}(x)$ is
		\begin{equation}
			\label{eq:coeff1mult}
			-\trace(AB) = -n \cdot m_1(AB).
		\end{equation}
		The formula for the first moment then follows. \\ \\
		
		For the formula of the second moment, we compare the coefficients of $x^{n-2}$ in $\chi_{A}(x) \multconv \chi_{B}(x)$ and in $\chi_{AB}(x)$. 
		In $\chi_{A}(x) \multconv \chi_{B}(x)$ it is
		$\frac{1}{{n \choose 2}} a_2 b_2$, which equals by \eqref{eq:Lewin}
		\begin{align}
			\label{eq:coeff2multconv}
			&\frac{n}{2(n-1)} [n^2 \cdot m_1^2(A)m_1^2(B) + m_2(A)m_2(B) 
			- n \cdot m_1^2(A)m_2(B) \nonumber \\ &- n \cdot m_2(A)m_1^2(B)].
		\end{align}
		The coefficient of $x^{n-2}$ in $\chi_{AB}(x)$ is
		\begin{equation}
			\label{eq:coeff2mult}
			\frac{\trace^2(AB)}{2} - \frac{\trace(AB)^2}{2} = \frac{n^2}{2} \cdot m_1^2(A)m_1^2(B) - \frac{n}{2} \cdot m_2(AB).
		\end{equation}
		The formula for the second moment then follows.
	\end{proof}
	
	Comparing other coefficients of powers of $x$ do not result in simple and nice formulas for higher moments of $AB$.
	
	As for cumulants, we can apply the formula of Arizmendi, Garza-Vargas and Perales \cite{AGP21} for finite free cumulants of finite free multiplicative convolutions, which implies that for matrices $A,B \in \Cnn$ that are in multiplicative FFP we have
	\begin{equation}
		\kappa_j(AB) =  \frac{(-1)^{j-1}}{n^{j+1}(j-1)!}\sum_{\substack{\sigma,\tau \in \mc{P}(j) \\ \sigma \vee \tau = 1_j}}
		n^{|\sigma|+|\tau|}\mu(0_j,\sigma)\mu(0_j,\tau)\kappa_{\sigma}(A)\kappa_{\tau}(B).
	\end{equation}
	The first three finite free cumulants are the following.
	\begin{align}
		\kappa_1(AB) &= \kappa_1(A) \kappa_1(B). \\
		\kappa_2(AB) &= -\frac{1}{n} \kappa_2(A) \kappa_2(B) + \kappa_2(A) \kappa_1^2(B) + \kappa_1^2(A) \kappa_2(B). \\
		\kappa_3(AB) &= \frac{2}{n^2}\kappa_3(A)\kappa_3(B) -\frac{3}{n}\kappa_3(A)\kappa_2(B)\kappa_1(B) + \kappa_3(A)\kappa_1^3(B) \\
		&\; \; -\frac{3}{n}\kappa_2(A)\kappa_1(A)\kappa_3(B) + 3\kappa_2(A)\kappa_1(A)\kappa_2(B)\kappa_1(B) + \kappa_1^3(A)\kappa_3(B). \nonumber
	\end{align}
		
	\bibliographystyle{plain}
	\bibliography{Convolvent}
\end{document}